        \newtheorem{Theorem}{Theorem}[section]
\newtheorem{Corollary}[Theorem]{Corollary}
\newtheorem{Definition}[Theorem]{Definition}
\newtheorem{Lemma}[Theorem]{Lemma}
\newtheorem{Proposition}[Theorem]{Proposition}
\newtheorem{remark}[Theorem]{Remark}
\date{}
\renewcommand\theequation{\thesection.\@arabic\c@equation}
\begin{document}

\title[Global dynamics]
{Global dynamics for a  class of reaction-diffusion equations with distributed delay and Neumann condition }


\author[Tarik Mohammed Touaoula]{Tarik Mohammed Touaoula \\ \tiny{Département de Mathématiques, Faculté des Sciences, Université de Tlemcen, Algérie. Laboratoire d'Analyse Non Linéaire et Mathématiques Appliquées}}

\keywords{ Reaction-diffusion equation; distributed delay; sub and super-solution;  Global attractivity;  exponential stability.
\\
\indent 2000 {\it Mathematics Subject Classification:} 34K20,
37L15, 92C37}

\maketitle

\begin{abstract}
In this paper, we investigate a class of non-monotone reaction-diffusion equations with distributed delay and a homogenous boundary Neumann condition, which have a positive steady state. The main concern is the global attractivity of the unique positive steady state. To achieve this, we use an argument of a sub and super-solution combined with fluctuation method.  We also give a condition for which the exponential stability of the positive steady state is reached. As an example, we apply our results  to diffusive Nicholson blowflies and diffusive Mackey-Glass equation with distributed delay. We point out that we obtain some new results on exponential stability of the positive steady state for these cited models.
\end{abstract}

\section{Introduction}\label{sec:1}

\noindent In this paper, we  study the following  initial boundary  value problem
\begin{equation}\label{A}
\left \{
\begin{array}{lll}
 u_t(x,t)-\Delta u(x,t)=-f(u(x,t))+\int_0^{\tau}h(a)g(u(x,t-a))da,\;\;\ t>0, x\in \Omega \\
\dfrac{\partial u}{\partial n}(x,t)=0, \;\  x\in \partial \Omega, t>0,\\
u(x,t)=\phi(x,t),\;\  (x,t)\in \Omega\times[-\tau,0],
\end{array}
\right.
\end{equation}
where $\Omega$ is a bounded domain in $\mathbb{R}^n$ with smooth boundary $\partial \Omega$ and $\dfrac{\partial u}{\partial n}$ denotes the derivative along the outward normal direction on the boundary of $\Omega.$

 Recently, an increasing attention has been paid to local and non-local delay reaction-diffusion equations,  in  bounded and  unbounded domains, this class has a large field of applications, particulary in population dynamics,  see for instance \cite{Deng}, \cite{Thieme1}, \cite{Yi}, \cite{Yi1}, \cite{Zou}, \cite{Zou1}, \cite{Zou2} and references therein.

 \noindent Several previous results on the asymptotic behavior of solutions for this class of problem have been obtained by different  methods, (e.g.\cite{Xu}, \cite{Zou}, \cite{Zou1}, \cite{Xiao}). However, most of them suppose that, either $f$ is linear or $g$ is monotone.  In the context where $f$ is linear, Yi et al. \cite{Zou} established the relationship between the convergence of solutions of (\ref{A})  to the positive steady state and the convergence of the sequence defined by $x_{n+1}=g(x_{n}).$ The key condition for delay independent stability is that, the map $g$ does not have a true periodic-2 point (hence, not true periodic points of any period, by Sharkovski theorem).  For functional differential  equations this idea has been already used by many authors, see  \cite{Braveman}, \cite{Gyori}, \cite{Krisztin},  \cite{Lani}, \cite{Liz00}, \cite{Liz0},  \cite{Liz1}, \cite{Rost1}, \cite{Zou}  and the references therein; we particulary mention the leading works in \cite{Mallet-Nussbaum}, \cite{Mallet-Nussbaum1}, \cite{Mallet}.

  The main objective of this paper is to present an approach that unify the treatment of the global dynamics of solutions of (\ref{A}) for  non-monotone delayed term $g$ and linear or nonlinear function $f$.

\noindent To reach it, we will use an argument of sub and super-solutions to (\ref{A}). With the help of  comparison principles, which require some careful construction of deformations of the delayed term $g,$ we will prove the existence of an interval which attracts all solutions of (\ref{A}) and  where the delayed term is monotone, either nondecreasing or non-increasing.  From then, the global attractivity and exponential stability of the unique positive steady state are established by, principally, the fluctuation method.

\noindent Furthermore,  there is no result (at our knowledge) on exponential stability of the positive steady state of system (\ref{A}); we will give a sufficient condition for the exponential stability of the positive steady state.

Throughout this paper, we will make the following assumptions:

\noindent we suppose that the function $h$ is positive and    \[\int_0^{\tau}h(a)da=1.\]

\noindent \textbf{(T1)} $f$ and $g$ are Liptschitz continuous with $f(0)=g(0).$

\noindent \textbf{(T2)}  $g(s)> g(0)$ for all $s> 0$ and there exists a number  $B>0$ such that $\max\limits_{v\in[0,s]}g(v) <f(s)$ for all $s> B.$

\noindent We will also use the notation    \[K(s):=g(s)-f(s).\]

 Let $C=C(\bar{\Omega},\mathbb{R})$ and $X=C(\bar{\Omega}\times[-\tau,0],\mathbb{R})$ be equipped with the usual supremum norm $||.||.$ Also, let $C_{+}=C(\bar{\Omega},\mathbb{R}^{+})$ and $X_{+}=C(\bar{\Omega}\times[-\tau,0],\mathbb{R}^{+}).$ For any $\phi, \psi \in X,$ we write $\phi\geq \psi$ if $\phi-\psi \in X_+;$ $\phi>\psi$ if $\phi-\psi \in X_{+}\setminus\{0\};$ $\phi>>\psi$ if $\phi-\psi \in Int(X_+).$\\
We define the ordered intervals
\[[\phi,\psi]_{X}:=\{\xi \in X;  \phi\leq \xi\leq \psi\},\] and for any $\chi\in \mathbb{R},$ we write $\chi^{*}$ for the element of $X$ satisfying $\chi^{*}(x,\theta)=\chi$ for all $(x,\theta)\in \bar{\Omega}\times[-\tau,0].$ The segment $u_t\in X$ of a solution is defined by the relation $u_t(x,\theta)=u(x,t+\theta)$ for $x\in \bar{\Omega}$ and $\theta\in[-\tau,0].$  The family of maps \[U: \mathbb{R}^{+}\times X_{+}\rightarrow X_{+},\] such that \[(t,\phi)\rightarrow u_t(\phi)\] defines a continuous semiflow on $X_{+},$ \cite{Wu}. The map $U(t,.)$ is defined from $X_{+}$ to $X_{+}$ which is the semiflow $U_t,$ denoted by  \[U_t(\phi)=U(t,\phi).\]

The set of equilibria of the semiflow generated by (\ref{A}) is given by \[E=\{\chi^{*}\in X_{+}; \chi \in \mathbb{R}\; \mbox{and} \; g(\chi)=f(\chi)\}.\]

Our work is organized as follows: in the next section, we establish  existence, uniqueness and some estimates of the positive solution with the help of sub and super-solution. We also  prove that the unique positive steady state is globally attractive when the delayed  term is monotone non-decreasing.
Section 3 is devoted to investigating the non-monotone case; we will principally show, the existence of closed attractive intervals for solutions of (\ref{A}). In Section 4 we  will present some theorems related to global attractivity and exponential stability of the positive steady state.  Finally some examples are given to illustrate our theorems.

\section{Preliminaries}

Let $T (t)$ ($t\geq0$) be the strongly continuous semigroup of bounded linear operators on $C$
generated by the Laplace operator $\Delta$ under the homogenous Neumann  conditions. It is well known that
$T (t)$ ($t \geq 0$) is an analytic, compact and strongly positive semigroup on $C$. Define $F: X\rightarrow C$ by

\begin{equation}\label{F}
F(\phi)(x)=-f(\phi(x,0))+\int_0^{\tau}h(a)g(\phi(x,-a))da, \;\  \mbox{for all} \;\  x\in \bar{\Omega}.
\end{equation}

 We consider the following integral equation with the given initial data

\begin{equation}\label{AA}
\left \{
\begin{array}{lll}
u(t)=T(t)\phi(.,0)+\int_0^{t}T(t-s)F(u_s)ds,\quad t\geq 0,\\
u_0=\phi \in X.
\end{array}
\right.
\end{equation}

For each $\phi \in X,$ $u(.,t)$  with values in $C$ on its maximum interval $[0,\sigma_{\phi}),$ is called a mild solution of (\ref{A}), see for instance \cite{Fitzgibbon}, \cite{Martin0}, \cite{Martin1}, \cite{Wu}, and it is called classical if it is $C^2$ in $x$ and $C^{1}$ in $t.$

We now introduce the notion of a sub and super-solution of problem (\ref{A}).  Let us consider the following problems
\begin{equation}\label{A1+}
\left \{
\begin{array}{lll}
 \bar{u}_t(x,t)-\Delta\bar{u}(x,t)\geq  -f(\bar{u}(x,t))+\int_0^{\tau}h(a)g^{+}(\bar{u}(x,t-a))da,\;\;\ \;\ (x,t)\in \Omega\times(0,T], \\
\dfrac{\partial \bar{u}}{\partial n}(x,t)= 0, \;\  x\in \partial \Omega, \;\ t>0,\\
\bar{u}(x,t) =\phi^{+}(x,t),\;\  (x,t)\in \bar{\Omega}\times[-\tau,0],
\end{array}
\right.
\end{equation}
and
\begin{equation}\label{A1-}
\left \{
\begin{array}{lll}
 \underline{u}_t(x,t)-\Delta\underline{u}(x,t)\leq -f(\underline{u}(x,t))+\int_0^{\tau}h(a)g^{-}(\underline{u}(x,t-a))da,\;\;\ \;\ (x,t)\in \Omega\times(0,T], \\
\dfrac{\partial \underline{u}}{\partial n}(x,t)= 0, \;\  x\in \partial \Omega, \;\ t>0,\\
\underline{u}(x,t)=\phi^{-}(x,t),\;\  (x,t)\in \bar{\Omega}\times[-\tau,0],
\end{array}
\right.
\end{equation}
with $\phi^{-}$ and $\phi^{+}$ are nonnegative continuous function in $\bar{\Omega}\times[-\tau,0].$

\begin{Definition}
Let $g^{+},$ $g^{-}$ be Lipschitz continuous, monotone nondecreasing functions and assume that $g^{-}\leq g\leq g^{+}$ in $[a,b],$  $a, b \in \mathbb{R}^{+}.$  Suppose also that $a\leq \phi^{-}\leq \phi \leq \phi^{+}\leq b$ in $\bar{\Omega}\times[-\tau,0].$  The smooth bounded functions $\bar{u},$ $\underline{u}$  defined by (\ref{A1+}) and (\ref{A1-})  are called  super-solution and sub-solution of (\ref{A}) respectively.   The pair $(\bar{u}$, $\underline{u})$ is said to be ordered if $\underline{u}\leq \bar{u}$ in $\bar{\Omega}\times [-\tau,T].$
\end{Definition}

Throughout this paper, denote by $\bar{u}(g^{+},\phi^{+}):=\bar{u}(x,t;g^{+},\phi^{+})$ \big(respectively $\underline{u}(g^{-},\phi^{-}):=\underline{u}(x,t;g^{-},\phi^{-})$\big) a super-solution \big(respectively sub-solution) corresponding to delay-function $g^{+}$ \big(respectively $g^{-}$\big) and initial function $\phi^{+}$ in (\ref{A1+})\big(respectively $\phi^{-}$ in (\ref{A1-})\big) of problem (\ref{A}).

The following proposition is a particular case of Proposition 8.3.4 and Corollary 8.3.5 in \cite{Wu}.
\begin{Proposition}\label{pr0}
Let $\underline{u},$ $\bar{u}$ be ordered sub and super-solution of (\ref{A}), then (\ref{AA}) has a unique mild solution $u$ on $[0,\sigma_{\phi}),$ and this solution satisfies
\begin{equation*}
\underline{u}(g^{-},\phi^{-})\leq  u(x,t) \leq \bar{u}(g^{+},\phi^{+}), \;  \mbox{for} \;\ (x,t)\in \bar{\Omega}\times[0,\sigma),
\end{equation*}
with $\sigma$ is the largest time for which all these functions are defined.
\end{Proposition}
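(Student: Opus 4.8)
The plan is to deduce the statement from the abstract semigroup theory of \cite{Wu} by checking its hypotheses, and to make the comparison part transparent by a direct parabolic maximum-principle argument carried out by the method of steps. First I would settle local existence and uniqueness of the mild solution: since $f$ and $g$ are Lipschitz continuous, the map $F:X\to C$ defined in (\ref{F}) is Lipschitz continuous on bounded subsets of $X$, while $T(t)$ is an analytic, compact, strongly continuous (and strongly positive) semigroup. These are precisely the hypotheses under which the variation-of-constants equation (\ref{AA}) has, for each $\phi\in X$, a unique mild solution $u(\cdot,t)$ on a maximal interval $[0,\sigma_\phi)$ (Proposition 8.3.4 of \cite{Wu}); the analytic smoothing of $T(t)$ then makes $u$ a classical solution for $t>0$, and quasi-positivity of $F$ (which follows from (T1)--(T2), since $g(s)\ge g(0)=f(0)$ for $s\ge0$) keeps $u$ nonnegative.

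For the comparison, the key point is that although $g$ is not monotone, on $[a,b]$ it is trapped between the nondecreasing Lipschitz functions $g^{-}$ and $g^{+}$, so that the auxiliary problems (\ref{A1+}), (\ref{A1-}) are quasimonotone and $\bar u$, $\underline u$ are genuine super- and sub-solutions of those quasimonotone problems. I would prove $\underline u(g^{-},\phi^{-})\le u\le\bar u(g^{+},\phi^{+})$ by induction over the intervals $[(k-1)\tau,k\tau]$, $k\ge1$. Assuming the inequalities, and membership in $[a,b]$, hold on $[-\tau,(k-1)\tau]$, set $w=\bar u-u$ on $[(k-1)\tau,k\tau]$; subtracting (\ref{AA}) from the differential inequality (\ref{A1+}) and writing $f(\bar u)-f(u)=c(x,t)\,w$ with $|c|\le\mathrm{Lip}(f)$ gives
\[
w_t-\Delta w+c(x,t)\,w\ \ge\ \int_0^\tau h(a)\bigl[g^{+}(\bar u(x,t-a))-g(u(x,t-a))\bigr]\,da .
\]
On the previous interval $u(\cdot,t-a)\le\bar u(\cdot,t-a)$ with both in $[a,b]$, so $g(u(x,t-a))\le g^{+}(u(x,t-a))\le g^{+}(\bar u(x,t-a))$ because $g\le g^{+}$ on $[a,b]$ and $g^{+}$ is nondecreasing; hence the right-hand side is nonnegative. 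Since $w\ge0$ at $t=(k-1)\tau$ and $\partial w/\partial n=0$ on $\partial\Omega$, the parabolic maximum principle — equivalently, positivity of the Neumann heat semigroup perturbed by the bounded zero-order term $c$, which is the content of Corollary 8.3.5 of \cite{Wu} — yields $w\ge0$. The inequality $\underline u\le u$ is obtained symmetrically from (\ref{A1-}), using $g^{-}\le g$ on $[a,b]$ and monotonicity of $g^{-}$. Iterating up to $\sigma=\min\{\sigma_\phi,T\}$ completes the argument.

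The main obstacle is not the maximum principle itself but keeping the induction self-consistent: the comparison between $g$ and $g^{\pm}$ is only available on $[a,b]$, so one must ensure that $\underline u$, $u$, $\bar u$ do not leave $[a,b]$ before the comparison is invoked — this is exactly why the argument is organised step by step over intervals of length $\tau$ and why the order relations $a\le\phi^{-}\le\phi\le\phi^{+}\le b$ and $g^{-}\le g\le g^{+}$ on $[a,b]$ are written into the definition of ordered sub/super-solutions. A secondary technical point is that $\bar u$ and $\underline u$ satisfy only differential \emph{inequalities}, so the comparison must be applied in its sub/super-solution form rather than as mere monotone dependence on the data; this is precisely why the cited statements of \cite{Wu} are the correct tools and why the proposition is their direct specialisation.
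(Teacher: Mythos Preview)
The paper gives no proof of this proposition at all: it simply records that the result is a particular case of Proposition~8.3.4 and Corollary~8.3.5 in \cite{Wu}. Your proposal is therefore not in conflict with the paper's argument---it is far more explicit than what the paper offers, and your first paragraph (checking that $F$ is Lipschitz on bounded sets and that $T(t)$ has the required properties, then invoking Wu) is exactly the reduction the paper has in mind.

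There is, however, one genuine slip in your comparison argument. You organise the proof by the method of steps over $[(k-1)\tau,k\tau]$ and claim that for $t$ in the current step the delayed values $u(\cdot,t-a)$ lie ``on the previous interval''. With a \emph{distributed} delay this is false: for $t\in[(k-1)\tau,k\tau]$ and $a\in[0,\tau]$ one has $t-a\in[t-\tau,t]$, and the portion $t-a\in[(k-1)\tau,t]$ lies in the \emph{current} step, where $u\le\bar u$ is precisely what you are trying to prove. Consequently the right-hand side of your inequality for $w=\bar u-u$ is not a known nonnegative source, and the bare parabolic maximum principle with fixed RHS does not close the induction. The cure is not to time-step at all but to use directly the comparison principle for \emph{quasimonotone} delayed parabolic problems: since $g^{+}$ is nondecreasing, the $g^{+}$-problem is quasimonotone, $u$ (as long as it remains in $[a,b]$) is a sub-solution of that problem because $g\le g^{+}$ there, and $\bar u$ is a super-solution of it by hypothesis; Corollary~8.3.5 of \cite{Wu} then yields $u\le\bar u$ in one stroke on the whole interval of existence, with the invariance of $[a,b]$ following by the continuation argument you sketch in your final paragraph. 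The same remark applies symmetrically to $\underline u\le u$ via the $g^{-}$-problem. In short, your citation of Wu's corollary is the right tool, but it does the work on its own---the step decomposition is neither needed nor quite correct for distributed delay.
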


\begin{Lemma}\label{lem00}
If $\phi \in X_{+},$ the problem (\ref{AA}) admits a unique solution $u$. In addition we have the following results:
\begin{itemize}
\item{(i)} $(u)_t \in X_{+}$ for all $t\in[0,\sigma_{\phi});$

\item{(ii)} $\sigma_{\phi}=+\infty;$

\item{(iii)} $u(x,t)$ is a classical solution of (\ref{A}) for $x\in \bar{\Omega}$ and $t>\tau.$
\end{itemize}
\end{Lemma}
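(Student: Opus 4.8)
The plan is to read off existence, uniqueness and an a priori bound directly from Proposition~\ref{pr0} by exhibiting an explicit pair of ordered \emph{constant} sub- and super-solutions, then to promote local existence to global existence via the blow-up alternative, and finally to run a standard parabolic bootstrap on $(\tau,\infty)$. First I would record that the map $F$ of (\ref{F}) is Lipschitz continuous on bounded subsets of $X$ (an immediate consequence of \textbf{(T1)} and $\int_0^{\tau}h\,da=1$), so that (\ref{AA}) has a unique mild solution $u$ on a maximal interval $[0,\sigma_{\phi})$ obeying the usual blow-up alternative: either $\sigma_{\phi}=+\infty$, or $\limsup_{t\to\sigma_{\phi}^{-}}\|u(\cdot,t)\|_{C}=+\infty$ (see \cite{Martin1}, \cite{Wu}).

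Next, fix $M>\max\{B,\|\phi\|\}$ and put $g^{-}\equiv g(0)$, $g^{+}(s):=\max_{v\in[0,s]}g(v)$, $\phi^{-}\equiv 0$, $\phi^{+}\equiv M$. Both $g^{\pm}$ are nondecreasing, $g^{-}$ is trivially Lipschitz, and $g^{+}$ inherits a Lipschitz constant from $g$; moreover \textbf{(T2)} forces $g(s)\ge g(0)$ for all $s\ge 0$, so $g^{-}\le g\le g^{+}$ on $[0,M]$. The constant functions $\underline{u}\equiv 0$ and $\bar{u}\equiv M$ then satisfy (\ref{A1-}) and (\ref{A1+}) respectively: for $\underline{u}$ the required inequality reads $0\le -f(0)+g^{-}(0)=-f(0)+g(0)=0$ by \textbf{(T1)}, and for $\bar{u}$ it reads $0\ge -f(M)+g^{+}(M)$, which holds because $M>B$ and \textbf{(T2)} give $g^{+}(M)=\max_{v\in[0,M]}g(v)<f(M)$; the Neumann and initial conditions are trivial since the functions are constant. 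Since $0\le\phi^{-}\le\phi\le\phi^{+}\le M$ and $\underline{u}\le\bar{u}$, Proposition~\ref{pr0} applies and produces the unique mild solution $u$ together with $0\le u(x,t)\le M$ on $\bar{\Omega}\times[0,\sigma)$; as $\underline{u}$ and $\bar{u}$ are defined for all $t\ge 0$, we have $\sigma=\sigma_{\phi}$. In particular $u(x,t)\ge 0$, that is $u_{t}\in X_{+}$ for every $t\in[0,\sigma_{\phi})$, which is (i); and the bound $\|u(\cdot,t)\|_{C}\le M$ on $[0,\sigma_{\phi})$ rules out the second branch of the blow-up alternative, so $\sigma_{\phi}=+\infty$, which is (ii).

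For (iii), I would bootstrap regularity. On $(0,\infty)$ the bounded continuous solution $u$, represented by the variation-of-constants formula with the analytic semigroup $T(t)$, is locally H\"older continuous in $t$ with values in $C$; for $t>\tau$ the segment $u_{t}$ samples $u(\cdot,s)$ only at times $s\ge t-\tau>0$, so $s\mapsto F(u_{s})$ is locally H\"older on $(\tau,\infty)$. The standard regularity theory for semilinear parabolic equations with analytic semigroups (\cite{Fitzgibbon}, \cite{Martin1}, \cite{Wu}) then yields $u(\cdot,t)\in D(\Delta)$ and $t\mapsto u(\cdot,t)\in C^{1}((\tau,\infty),C)$, and elliptic regularity upgrades this to $u(\cdot,t)\in C^{2}(\bar{\Omega})$; hence $u$ is a classical solution of (\ref{A}) for $t>\tau$. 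The step I expect to need the most care is the verification that $\underline{u}\equiv 0$ and $\bar{u}\equiv M$ are genuine sub/super-solutions in the precise sense of the Definition --- in particular that $g^{+}(s)=\max_{v\in[0,s]}g(v)$ is Lipschitz, and that the constraint $g^{-}(0)=g(0)$ forced by $\underline{u}\equiv 0$ is compatible with $g^{-}\le g$; this is exactly where $f(0)=g(0)$ and the consequence $g\ge g(0)$ on $\mathbb{R}^{+}$ of \textbf{(T2)} are used. The bootstrap giving (iii) is routine and quotable.
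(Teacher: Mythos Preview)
Your proof is correct and follows essentially the same route as the paper's: both construct the constant ordered pair $\underline{u}\equiv 0$, $\bar{u}\equiv L$ (you write $M$) with $g^{-}\equiv g(0)$ and $g^{+}(s)=\max_{v\in[0,s]}g(v)$, apply Proposition~\ref{pr0} to get $0\le u\le L$ and hence (i)--(ii), and then invoke the regularity theory in \cite{Wu} for (iii). The only differences are cosmetic---you spell out the blow-up alternative and the parabolic bootstrap, whereas the paper compresses these into a one-line citation of Theorem~2.2.6 in \cite{Wu}.
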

\begin{proof}
Let  $L\geq B,$ then for any $\phi \in[0,L]_{X},$  and $\underline{u}=0,$  the function $\underline{u}(g(0),\phi)$   is a sub-solution of (\ref{A}). Similarly, let ${g}^{+}(s):=\max\limits_{v\in[0,s]}g(v)$, for $\bar{u}=L,$ using \textbf{(T2)}, the function $\bar{u}(g^{+},\phi)$  is a super-solution of (\ref{A}). By Proposition \ref{pr0}, the problem (\ref{AA}) admits a unique solution  $0\leq u(x,t)\leq L$ for all $(x,t)\in \bar{\Omega}\times [0,\sigma_{\phi}).$ Since $L$ is arbitrarily large, it then follows that $\sigma_{\phi}=\infty$ for all $\phi \in X_{+}.$
This complete the proof of statements $(i)$ and $(ii)$.
The statement $(iii)$  follows from $(ii)$ and Theorem 2.2.6 in \cite{Wu}.
\end{proof}
We further have the following results.
\begin{Lemma}\label{lem01}
If $\phi \in X_{+}\setminus \{0\},$ then we have the following results:
\item{(i)} $(u)_t\in Int(X_{+})$ for all $t> 2\tau,$
\item{(ii)} The semiflow $U_t$ admits a compact attractor.
\end{Lemma}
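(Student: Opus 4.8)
The plan for (i) is to exploit the strong positivity of the Neumann heat semigroup $T(t)$ through a variation-of-constants formula with a shifted generator. Fix $L\ge B$ so that the solution takes values in $[0,L]$ (Lemma \ref{lem00}) and pick $\beta$ larger than the Lipschitz constant of $f$, so that $r\mapsto\beta r-f(r)$ is nondecreasing on $[0,L]$. Since $f(0)=g(0)$ by \textbf{(T1)} and $g(v)\ge g(0)$ for $v\ge0$ by \textbf{(T2)}, the map
\[
N(u_s)(x):=\beta u(x,s)-f(u(x,s))+\int_0^{\tau}h(a)g(u(x,s-a))\,da
\]
satisfies $N(u_s)\ge-f(0)+g(0)=0$ for every $s\ge0$. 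Rewriting (\ref{AA}) with the generator $\Delta-\beta I$ then yields, for any $t_1\ge0$,
\[
u(\cdot,t)=e^{-\beta(t-t_1)}T(t-t_1)u(\cdot,t_1)+\int_{t_1}^{t}e^{-\beta(t-s)}T(t-s)N(u_s)\,ds\ \ge\ e^{-\beta(t-t_1)}T(t-t_1)u(\cdot,t_1).
\]

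I would first rule out that $u(\cdot,t)\equiv0$ for all $t\in[0,\tau]$. If this held, the identity above with $t_1=0$ (and first term zero) would force $\int_0^{t}T(t-s)N(u_s)\,ds=0$ on $[0,\tau]$; strong positivity of $T(t-s)$ and $N(u_s)\ge0$ then give $N(u_s)\equiv0$ for a.e.\ $s\in[0,\tau]$, hence, since $h>0$ and $g(v)=g(0)\Rightarrow v=0$ for $v\ge0$, that $u(\cdot,s-a)\equiv0$ for a.e.\ $a\in(0,\tau)$; letting $s\downarrow0$ and using continuity gives $\phi\equiv0$, a contradiction. So there is $t_0\in[0,\tau]$ with $u(\cdot,t_0)\in C_+\setminus\{0\}$, and the displayed inequality with $t_1=t_0$ shows $u(\cdot,t)\ge e^{-\beta(t-t_0)}T(t-t_0)u(\cdot,t_0)\in Int(C_+)$ for every $t>t_0$, by strong positivity of $T$. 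As $t_0\le\tau$, this gives $u(x,s)>0$ for all $x\in\bar\Omega$ and all $s>\tau$, i.e.\ $(u)_t\in Int(X_+)$ for all $t>2\tau$. The only delicate point here is the sign of $N$, which is exactly where $f(0)=g(0)$ together with $g\ge g(0)$ is used.

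For (ii), the plan is point dissipativity via a scalar comparison equation, followed by the standard attractor theorem. Let $g^{+}(s):=\max_{v\in[0,s]}g(v)$, a Lipschitz nondecreasing majorant of $g$, and let $\bar u$ solve (\ref{A}) with $g$ replaced by $g^{+}$ and with constant initial datum $\phi^{+}\equiv\|\phi\|$. Since $g^{+}$ is nondecreasing, Proposition \ref{pr0} (with sub-solution $\underline u=0$) gives $0\le u\le\bar u$; and since $T(t)$ leaves constants invariant and $g^{+}$ is nondecreasing, uniqueness forces $\bar u(x,t)=Y(t)$, where $Y$ solves the scalar delay equation
\[
\dot Y(t)=-f(Y(t))+\int_0^{\tau}h(a)g^{+}(Y(t-a))\,da,\qquad Y\equiv\|\phi\|\ \text{on }[-\tau,0].
\]
This $Y$ stays in $[0,\max(\|\phi\|,B)]$ and has bounded derivative, so by the fluctuation lemma there are $t_k\to\infty$ with $Y(t_k)\to M:=\limsup_{t\to\infty}Y(t)$ and $\dot Y(t_k)\to0$. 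Passing to the limit, using $Y(t_k-a)\le\sup_{s\ge t_k-\tau}Y(s)\to M$ and that $g^{+}$ is nondecreasing, continuous and bounded on $[0,\max(\|\phi\|,B)]$, we obtain $f(M)\le g^{+}(M)$, which by \textbf{(T2)} forces $M\le B$. Hence $\limsup_{t\to\infty}\|(u)_t\|\le B$ uniformly in $\phi$, so the ball of radius $B+1$ in $X_+$ attracts every point of $X_+$: the semiflow $U_t$ is point dissipative.

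Finally, $U_t$ sends bounded sets to bounded sets (by the bound $0\le u\le\max(\|\phi\|,B)$ obtained in the proof of Lemma \ref{lem00}) and is eventually compact: for $t>\tau$ the solutions are classical (Lemma \ref{lem00}(iii)), and parabolic smoothing together with this boundedness makes $U_t(\mathcal{B})$ precompact for every bounded $\mathcal{B}\subset X_+$, as in \cite{Wu}. A point-dissipative, eventually compact semiflow whose orbits of bounded sets are bounded possesses a global compact attractor (Hale's theorem for dissipative systems), which is assertion (ii). The main obstacle in (ii) is the uniform-in-$\phi$ dissipativity estimate; the scalar comparison equation and the fluctuation lemma reduce it to \textbf{(T2)}, after which the conclusion is standard machinery.
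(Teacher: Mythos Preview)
Your proof is correct and, for part (ii), follows essentially the same route as the paper: a scalar delay comparison equation with the nondecreasing envelope $g^{+}$, the fluctuation lemma to get $\limsup Y\le B$ from \textbf{(T2)}, and then point dissipativity together with the eventual compactness of the semiflow to invoke the standard attractor theorem.

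For part (i) your argument is a mild streamlining of the paper's. The paper proceeds in two stages: it first shows $u_{\tau}\neq0$ and then $u_{2\tau}\neq0$ via the integral formula, obtaining a point $(x^{*},t^{*})$ with $t^{*}\in[\tau,2\tau]$ and $u(x^{*},t^{*})>0$; it needs $t^{*}>\tau$ because it then uses the \emph{differential} inequality $u_t-\Delta u\ge -Lu$, which requires classical regularity (available only for $t>\tau$), together with a comparison against the linear heat equation. Your approach avoids this detour: the variation-of-constants identity with the shifted generator $\Delta-\beta I$ and the nonnegativity of $N(u_s)$ (which is exactly where $f(0)=g(0)$ and $g\ge g(0)$ enter) gives directly $u(\cdot,t)\ge e^{-\beta(t-t_0)}T(t-t_0)u(\cdot,t_0)$ for \emph{mild} solutions, so a single application of the ``$u_\tau\neq0$'' argument suffices and positivity propagates immediately from any $t_0\in[0,\tau]$. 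The trade-off is modest: the paper's version is closer to a pure PDE comparison-principle presentation, while yours stays at the level of the abstract integral equation and needs no regularity beyond what Lemma~\ref{lem00} already provides.
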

\begin{proof}
 To prove the statement $(i)$ we first claim that $u_{\tau}\in X_{+}\setminus \{0\}.$ Otherwise, the Lemma \ref{lem00} implies that $u_{\tau}=0$ and thus $u(x,t)=0$ for all $(x,t)\in \bar{\Omega}\times [0,\tau].$ From (\ref{AA}) we have $\int_0^{t}T(t-s)F(u_s)ds=0$ for all $t\in [0,\tau].$ So, since $T(.)$ is a strongly positive semigroup we conclude that $F(u_s)=0$ for all $s\in[0,\tau].$ Moreover, since $u(x,t)=0$ for all $(x,t)\in \bar{\Omega}\times [0,\tau],$ and (\ref{F}), it follows
\begin{equation*}
\begin{array}{lll}
F(u_s)=-f(0)+\int_0^{\tau}h(a)g(u(.,s-a))da=0,
\end{array}
\end{equation*}
in view of \textbf{(T1)}, we get
\begin{equation*}
\int_0^{\tau}h(a)\bigg(g(u(.,s-a))-g(0)\bigg)da=0, \;\ \mbox{for all} \;\ s\in[0,\tau],
\end{equation*}
this implies that $g(\phi(.,\sigma))=g(0),$ and hence $\phi=0,$ which is absurd. Similarly we may show that $u_{2\tau}\in X_{+}\setminus \{0\}.$ Thus there exists $(x^{*},t^{*})\in \bar{\Omega}\times [\tau, 2\tau]$ such that $u(x^{*},t^{*})>0.$ It follows from (\ref{A}) and \textbf{(T1)}, \textbf{(T2)} that

\begin{equation*}
\begin{array}{lll}
 u_t(x,t)-\Delta u(x,t)&\geq& -f(u(x,t))+g(0),\\
 &=& f(0)-f(u(x,t)),\\
 &\geq & -L u(x,t) \quad \mbox{in}\;\ \Omega \times (t^{*},\infty),
 \end{array}
\end{equation*}
with $L$ is a Lipschitz constant associated to $f$. In addition
\begin{equation*}
\begin{array}{lll}
\dfrac{\partial u}{\partial n}=0 \quad \mbox{on}\;\ \partial \Omega\times(t^{*},\infty) \;\ \mbox{and}\\
u(x,t^{*})\geq 0, \;\ \mbox{ for all} \;\ x\in \Omega.
\end{array}
\end{equation*}
Let us introduce the following problem
\begin{equation*}
\left\{
\begin{array}{lll}
v_t(x,t)-\Delta v(x,t)=-Lv(x,t) \quad \mbox{in}\;\ (x,t)\in \Omega\times(t^{*},\infty),\\
\dfrac{\partial v}{\partial n}=0 \quad \mbox{on}\;\ \partial \Omega\times(t^{*},\infty),\\
v(x,t^{*})=u(x,t^{*}) \quad \mbox{for}\;\ x\in \bar{\Omega}.
\end{array}
\right.
\end{equation*}
By applying Theorem 7.3.4 in \cite{Smith}, we obtain that $u(x,t)\geq v(x,t)$ for all $(x,t)\in \bar{\Omega}\times(t^{*},\infty).$ On the other hand, by $v(x^{*},t^{*})>0$ and Theorem 7.4.1 in \cite{Smith}, we have $v(x,t)>0$ for all $(x,t)\in\bar{\Omega}\times(t^{*},\infty).$ So, $u(x,t)>0$ for all $(x,t)\in\bar{\Omega}\times(t^{*},\infty)$ and the statement (i) holds.
Concerning the statement (ii), let ${g}^{+}(s):=\max\limits_{v\in[0,s]}g(v),$ and $\bar{u}$ (its existence can be easily showed, see for instance \cite{Hale}) be a function verifies the following system

\begin{equation}\label{AA1}
\left \{
\begin{array}{lll}
 \bar{u}^{'}(t)=-f(\bar{u}(t))+\int_0^{\tau}{g}^{+}(\bar{u}(t-a))da,\;\;\ t>0, \\
\bar{u}(t)=\psi(t):=\max\limits_{x\in \bar{\Omega}}\phi(x,t),\;\  t\in [-\tau,0].
\end{array}
\right.
\end{equation}
  Observe that $\bar{u}(g^{+},\psi)$ is a super-solution of (\ref{A}). By Proposition \ref{pr0}, we have  $u(x,t)\leq \bar{u}(t)$ for all $(x,t)\in \bar{\Omega}\times[0,\infty).$  We claim that  $\limsup_{t\rightarrow \infty}\bar{u}(t)\leq B,$ for any $\phi \in X_{+}.$ Suppose, on the  contrary, that $\limsup\limits_{t\rightarrow \infty} \bar{u}(t):=l>B,$ then, in view of \textbf{(T2)} we have
\begin{equation}\label{V0}
-f(l)+{g}^{+}(l)<0.
\end{equation}
 On the other hand, from (\cite{Thieme0}, Proposition A.22) there exists $t_n\rightarrow \infty$ such that $\bar{u}(t_n)\rightarrow l$ and $\bar{u}'(t_n)\rightarrow 0$. Thus, substituting $\bar{u}(t_n)$ in (\ref{AA1}),
\begin{equation}\label{lim}
\bar{u}'(t_n)=-f(\bar{u}(t_n))+\int_0^{\tau}h(a){g}^{+}(\bar{u}(t_n-a))da,
\end{equation}
by the definition of $\limsup \bar{u}(t)$  and the monotonicity of the continuous function ${g}^{+}$ we have
\begin{equation}\label{lim1}
{g}^{+}(\lim\limits_{n\rightarrow \infty} \bar{u}(t_n-a)) \leq {g}^{+}(\lim\limits_{n\rightarrow \infty} \bar{u}(t_n)):={g}^{+}(l), \quad \forall a\in[0,\tau].
\end{equation}
Passing to the limit in (\ref{lim}) and combining with (\ref{lim1})   we get,
\begin{equation*}
0\leq -f(l)+{g}^{+}(l),
\end{equation*}
 and this provides a contradiction with (\ref{V0}). This implies that the semiflow $U_t: X_{+}\rightarrow X_{+}$ is point dissipative on $X_{+},$ see \cite{Smi-Thie}. Hence, applying  Theorem 2.2.6, \cite{Wu} and  Theorem 2.2.33, \cite{Smi-Thie}, we show that  $U_t,$  $t>\tau,$ admits a compact global attractor which
also attracts every bounded set in $X_{+}$. The statement $(ii)$ is reached. The proof of Lemma \ref{lem00} is completed.

\end{proof}

Although we will always suppose existence and uniqueness of the positive solution to the stationary problem of (\ref{A}), namely

\begin{equation}\label{stat}
\left \{
\begin{array}{lll}
 -\Delta U(x)=-f(U(x))+g(U(x)),\;\;\  x\in \Omega, \\
\dfrac{\partial U}{\partial n}(x)=0, \;\  x\in \partial \Omega, \\
\end{array}
\right.
\end{equation}
however, for the convenience of the reader, we give a lemma that ensures existence, uniqueness of positive solution to problem (\ref{stat}).
The proof of the following lemma is a special case of Theorem 2.3.4 in \cite{Pao}.
\begin{Lemma}
Let $\underline{u}(x),$ $\bar{u}(x)$ be ordered bounded positive sub and super-solution of (\ref{stat}) respectively and suppose that $\dfrac{g(u)-f(u)}{u}$ is a decreasing function for $u\in[\underline{u},\bar{u}],$  then the problem (\ref{stat}) admits a unique positive solution $u^{*}$ in $[\underline{u},\bar{u}].$
\end{Lemma}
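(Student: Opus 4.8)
The plan is to establish the statement via the standard monotone iteration scheme for elliptic problems, as developed in Pao's book, together with the extra sublinearity hypothesis to upgrade existence to uniqueness. First I would set up the iteration: choose a constant $M>0$ large enough that the map $s\mapsto -f(s)+g(s)+Ms$ is nondecreasing on the relevant range $[\min_{\bar\Omega}\underline{u},\max_{\bar\Omega}\bar{u}]$; this is possible since $f$ and $g$ are Lipschitz. Then define two sequences $\overline{U}^{(0)}=\bar{u}$, $\underline{U}^{(0)}=\underline{u}$, and for $k\geq 1$ let $\overline{U}^{(k)}$, $\underline{U}^{(k)}$ solve the linear Neumann problems
\begin{equation*}
-\Delta \overline{U}^{(k)}+M\overline{U}^{(k)}=-f(\overline{U}^{(k-1)})+g(\overline{U}^{(k-1)})+M\overline{U}^{(k-1)} \quad \text{in }\Omega,\qquad \frac{\partial \overline{U}^{(k)}}{\partial n}=0 \text{ on }\partial\Omega,
\end{equation*}
and analogously for $\underline{U}^{(k)}$. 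Using the maximum principle for the operator $-\Delta+M$ under Neumann conditions, together with the monotonicity of the right-hand side and the sub/supersolution inequalities, one shows by induction that $\underline{u}=\underline{U}^{(0)}\leq \underline{U}^{(1)}\leq\cdots\leq \overline{U}^{(1)}\leq \overline{U}^{(0)}=\bar{u}$ pointwise. Monotone bounded sequences converge; elliptic regularity (Schauder estimates) gives that the limits $\overline{U}$ and $\underline{U}$ are classical solutions of (\ref{stat}) lying in $[\underline{u},\bar{u}]$, with $\underline{U}\leq \overline{U}$.

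The remaining and genuinely substantive step is to prove that $\overline{U}\equiv\underline{U}$, i.e.\ uniqueness in $[\underline{u},\bar{u}]$; this is exactly where the hypothesis that $u\mapsto \dfrac{g(u)-f(u)}{u}$ is decreasing is used. Suppose $U_1\geq U_2$ are two positive solutions in the interval. I would test the equation for $U_1$ against $U_2$ and vice versa: multiply $-\Delta U_1=(g(U_1)-f(U_1))$ by $U_2$, multiply $-\Delta U_2=(g(U_2)-f(U_2))$ by $U_1$, integrate over $\Omega$, and subtract. The boundary terms vanish because of the homogeneous Neumann condition, and the Laplacian terms cancel by the symmetry of $\int_\Omega(U_2\Delta U_1-U_1\Delta U_2)$ (Green's identity plus $\partial_n U_i=0$). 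What survives is
\begin{equation*}
0=\int_\Omega U_1 U_2\left(\frac{g(U_1)-f(U_1)}{U_1}-\frac{g(U_2)-f(U_2)}{U_2}\right)dx.
\end{equation*}
Since $U_1,U_2>0$ and the parenthetical factor is $\leq 0$ by the monotonicity hypothesis, the integrand is $\leq 0$; its integral being zero forces it to vanish identically, hence $\dfrac{g(U_1)-f(U_1)}{U_1}=\dfrac{g(U_2)-f(U_2)}{U_2}$ a.e., and strict monotonicity of that quotient gives $U_1=U_2$. Applying this with $U_1=\overline{U}$, $U_2=\underline{U}$ yields $\overline{U}=\underline{U}=:u^*$, and the same argument shows any solution in $[\underline{u},\bar{u}]$ coincides with $u^*$.

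I expect the main obstacle to be not the uniqueness identity (which is a clean Picone/Green-identity computation) but the technical bookkeeping in the monotone iteration: one must be careful that the constant $M$ is chosen uniformly over the closed bounded interval $[\inf\underline{u},\sup\bar{u}]$, that the linear Neumann problems are genuinely solvable (the operator $-\Delta+M$ with $M>0$ is invertible under Neumann conditions, so this is fine), and that the pointwise ordering survives passage to the limit and that the limit functions indeed satisfy the nonlinear problem in the classical sense via elliptic regularity. Alternatively, since the paper explicitly states this is ``a special case of Theorem 2.3.4 in \cite{Pao}'', a fully acceptable and much shorter route is simply to verify that the hypotheses of that theorem are met — namely the existence of an ordered pair of sub and supersolutions and the sublinearity condition on $(g-f)/u$ — and invoke it directly; I would present the Green's-identity uniqueness argument as the one new ingredient worth spelling out, and cite \cite{Pao} for the iteration machinery.
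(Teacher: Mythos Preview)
Your proposal is correct and matches the paper's approach: the paper gives no argument beyond the sentence ``The proof of the following lemma is a special case of Theorem 2.3.4 in \cite{Pao}'', and what you have written is precisely a reconstruction of that theorem's proof---monotone iteration between ordered sub- and supersolutions for existence, followed by the Green's-identity/Picone computation exploiting the decreasing quotient $(g-f)/u$ for uniqueness. One minor remark: your final sentence (``the same argument shows any solution in $[\underline{u},\bar{u}]$ coincides with $u^*$'') tacitly uses that the iteration produces the \emph{minimal} and \emph{maximal} solutions in the order interval, so that any other solution is already sandwiched between $\underline{U}$ and $\overline{U}$; it would be worth stating this explicitly, since your uniqueness identity only applies directly to an \emph{ordered} pair of solutions.
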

\begin{remark}
Notice that a spatially inhomogeneous steady state solutions of reaction diffusion equations subject to Neumann conditions in a smooth domain are necessarily unstable see for instance \cite{Wu}.
\end{remark}

As a consequence of Theorem 9.3.3 in \cite{Wu}, we have the following asymptotic property of solution of (\ref{A}) in the case where $g$ is monotone nondecreasing.
\begin{Theorem}\label{th03}
Let $\underline{u}(x),$ $\bar{u}(x)$ (do not depend of time $t$) be bounded ordered positive sub and super-solution of (\ref{A}) respectively. Assume also that the problem (\ref{stat}) admits a unique positive solution $u^{*}$ in $[\underline{u},\bar{u}].$ If  $g$  is a nondecreasing function, then all solutions of (\ref{A}) converge to the positive steady state $u^{*}$.
\end{Theorem}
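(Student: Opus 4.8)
The plan is to deduce Theorem \ref{th03} from the monotone-semiflow machinery of \cite{Wu} (Theorem 9.3.3), since the delayed term $g$ is nondecreasing. The core idea is that, once $g$ is monotone nondecreasing, the semiflow $U_t$ generated by (\ref{A}) restricted to the order interval $[\underline{u},\bar{u}]_X$ is an (eventually strongly) monotone semiflow, and the hypotheses of the theorem give us a sub- and super-solution pair sandwiching a \emph{unique} equilibrium $u^{*}$; the standard convergence result for monotone systems with a unique equilibrium in an attracting order interval then yields the claim.

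\medskip

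First I would observe that, since $g$ is nondecreasing on the relevant range, we may take $g^{+}=g^{-}=g$ in the definitions (\ref{A1+})--(\ref{A1-}), so the constant-in-time functions $\underline{u}(x)$ and $\bar{u}(x)$ are genuine ordered sub- and super-solutions of (\ref{A}) in the sense of the Definition; note that because they are equilibrium-type sub/supersolutions, the time-$\tau$ maps (indeed all $U_t$) leave the order interval $[\underline{u}^{*},\bar{u}^{*}]_X$ positively invariant. By Proposition \ref{pr0} and Lemma \ref{lem00}, every solution starting in this interval exists globally and remains trapped there. Next I would invoke the comparison principle (Theorem 7.3.4 in \cite{Smith}) together with the monotonicity of $g$ to verify that $U_t$ is order-preserving on $[\underline{u}^{*},\bar{u}^{*}]_X$: if $\phi\leq\psi$ then $U_t(\phi)\leq U_t(\psi)$, because the nonlinearity $F$ is quasimonotone once $g$ is nondecreasing (the $-f(u)$ term is handled by adding a large multiple of $u$ to both sides, exactly as in the proof of Lemma \ref{lem01}). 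The strong positivity of the heat semigroup $T(t)$ upgrades this to eventual strong monotonicity for $t>2\tau$, paralleling Lemma \ref{lem01}(i).

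\medskip

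Then I would apply the abstract convergence theorem for monotone semiflows. The monotone iteration generated by the sub/supersolution pair produces two monotone sequences $\underline{u}^{(n)}=U_{n\tau}(\underline{u}^{*})$ (nondecreasing) and $\bar{u}^{(n)}=U_{n\tau}(\bar{u}^{*})$ (nonincreasing), each of which converges (by monotonicity and the compactness of the attractor from Lemma \ref{lem01}(ii)) to an equilibrium of $U_t$, hence to a solution of the stationary problem (\ref{stat}) lying in $[\underline{u},\bar{u}]$. By the uniqueness hypothesis, both limits equal $u^{*}$, so the order interval is ``squeezed'' onto $u^{*}$. Since any solution with initial data $\phi\in[\underline{u}^{*},\bar{u}^{*}]_X$ is trapped between $U_t(\underline{u}^{*})$ and $U_t(\bar{u}^{*})$ by monotonicity, it converges to $u^{*}$. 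Finally, to remove the restriction that $\phi$ lie in the order interval, I would use Lemma \ref{lem01}(ii): the compact global attractor is contained in $[\underline{u}^{*},\bar{u}^{*}]_X$ for suitable choice of $\underline{u},\bar{u}$ (e.g.\ $\underline{u}$ arbitrarily small and $\bar{u}$ arbitrarily large, using \textbf{(T2)} to control the upper bound as in the proof of Lemma \ref{lem01}), so every solution eventually enters such an interval and the conclusion follows.

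\medskip

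\emph{Main obstacle.} The delicate point is verifying that the hypotheses of Theorem 9.3.3 in \cite{Wu} are met in our setting with a \emph{distributed} delay and a possibly \emph{nonlinear} death term $f$ — specifically, establishing the eventual strong monotonicity and the precompactness of orbits in the phase space $X_{+}$, and checking that the monotone iteration limits are indeed equilibria (not merely $\omega$-limit sets of positive measure). The distributed-delay structure requires care because the order interval must be taken in $X=C(\bar\Omega\times[-\tau,0],\mathbb R)$, not just in $C$; but since $\underline{u},\bar{u}$ do not depend on $t$, the associated constant segments $\underline{u}^{*},\bar{u}^{*}$ are the natural sub/supersolutions in $X$, and the comparison results of \cite{Wu}, \cite{Martin1}, \cite{Smith} apply directly. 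Once strong monotonicity and compactness are in hand, uniqueness of the positive equilibrium does the rest essentially for free.
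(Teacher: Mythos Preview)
Your proposal is correct and follows essentially the same approach as the paper: the paper states this theorem directly as ``a consequence of Theorem 9.3.3 in \cite{Wu}'' without giving any further proof, and you have simply supplied the details of how that abstract monotone-semiflow result is applied (monotonicity via nondecreasing $g$, invariance of the order interval, compactness, and squeezing via the unique equilibrium). Your elaboration of the verification steps is more explicit than anything the paper provides, but the route is identical.
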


Suppose now that there exists $u^{*}$ such that
\begin{equation}\label{hypincr}
\left\{
 \begin{array}{lll}\vspace{0.3cm}
  g(u)> f(u)  \quad \mbox{for all } \; 0\leq u<u^{*}\\
 g(u)< f(u)  \quad \mbox{for all } \; u^{*}<u\leq B.
 \end{array}
 \right.
 \end{equation}

\begin{Theorem}\label{perincr}
 Assume that (\ref{hypincr}) holds and $g$ is a nondecreasing function. Suppose also that (\ref{stat}) admits a unique positive solution $u^{*}.$  Then the solution of problem (\ref{A})  is strongly persistent and converges to $u^{*},$  provided the corresponding initial function $\phi \in X_{+}\setminus\{0\}.$
\end{Theorem}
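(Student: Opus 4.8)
The plan is to construct two \emph{constant} sub- and super-solutions of (\ref{A}) that trap the solution after a finite time, and then to quote Theorem~\ref{th03}. Since $g$ is nondecreasing one may take $g^{-}=g^{+}=g$ in (\ref{A1+})--(\ref{A1-}), so the constants below are bona fide sub/super-solutions of (\ref{A}) itself, and the quantity $K=g-f$ governs everything. The argument splits into an upper bound (already available), a persistence step, and the final reduction to the monotone theorem.

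First I would record the uniform upper bound contained in the proof of Lemma~\ref{lem01}: the solution $\bar u$ of the scalar problem (\ref{AA1}) dominates $u$ and satisfies $\limsup_{t\to\infty}\bar u(t)\le B$, hence $\limsup_{t\to\infty}\max_{\bar\Omega}u(\cdot,t)\le B$, so there is $T_{1}$ with $u(x,t)\le B+1$ for $t\ge T_{1}$. By \textbf{(T2)} and the monotonicity of $g$ one has $\max_{v\in[0,B+1]}g(v)=g(B+1)<f(B+1)$, i.e. $K(B+1)<0$, which is precisely what makes the constant $(B+1)^{*}$ a super-solution of (\ref{A}).

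For strong persistence I would exploit the strong positivity of Lemma~\ref{lem01}(i): fix any $t_{0}>2\tau$, so $u_{t_{0}}\in Int(X_{+})$, and set $\delta:=\tfrac12\min\{u^{*},\,\min_{\bar\Omega\times[-\tau,0]}u(x,t_{0}+\theta)\}$, a positive number with $0<\delta<u^{*}$. Because $0<\delta<u^{*}$, hypothesis (\ref{hypincr}) yields $K(\delta)=g(\delta)-f(\delta)>0$, so $\delta^{*}$ is a sub-solution of (\ref{A}); pairing it with a large constant super-solution $M^{*}$ ($M\ge\max\{B+1,\|u_{t_{0}}\|\}$, so $K(M)<0$) and applying the comparison Proposition~\ref{pr0} to the solution with initial datum $u_{t_{0}}$ at time $t_{0}$ gives $u(x,t)\ge\delta$ for all $(x,t)\in\bar\Omega\times[t_{0},\infty)$. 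In particular $\liminf_{t\to\infty}\min_{\bar\Omega}u(\cdot,t)\ge\delta>0$, which is the asserted strong persistence.

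Finally, choose $T\ge\max\{T_{1},t_{0}+\tau\}$; by the two previous steps $u_{T}\in[\delta^{*},(B+1)^{*}]_{X}$. The constants $\underline u=\delta$ and $\bar u=B+1$ are bounded, positive, time-independent, ordered sub- and super-solutions of (\ref{A}), and $u^{*}$ is, by hypothesis, the unique positive solution of (\ref{stat}), with $0<\delta<u^{*}$ and $u^{*}\le B<B+1$; hence $u^{*}$ is the unique positive solution of (\ref{stat}) in $[\underline u,\bar u]$. Theorem~\ref{th03} applied to the solution starting from $u_{T}$ then gives $u_{T+t}\to u^{*}$ in $X$ as $t\to\infty$, i.e. $u(x,t)\to u^{*}$ uniformly on $\bar\Omega$. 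The delicate point is the persistence step: it rests on (i) checking against the precise Definition that $\delta^{*}$ really is a sub-solution — this is where (\ref{hypincr}) near $0$ enters, together with the fact that $g$ nondecreasing allows the choice $g^{-}=g$ — and (ii) invoking the comparison principle after a time shift, seeding it at a moment $t_{0}$ at which Lemma~\ref{lem01}(i) already guarantees $u_{t_{0}}\in Int(X_{+})$, so that a strictly positive $\delta$ can be extracted; everything else is assembly.
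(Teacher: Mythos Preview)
Your proof is correct and follows essentially the same strategy as the paper: use Lemma~\ref{lem01}(i) to obtain strict positivity of $u_{t_0}$ for some $t_0>2\tau$, extract from this a small positive constant sub-solution (your $\delta$, the paper's $\varepsilon$) via (\ref{hypincr}), pair it with a large constant super-solution coming from the dissipativity in Lemma~\ref{lem01}(ii), and finish with Theorem~\ref{th03}. Your write-up is in fact more explicit than the paper's about how the small constant is chosen and why Proposition~\ref{pr0} applies after the time shift; the only cosmetic slip is that in the final step you need $T\ge T_1+\tau$ (not merely $T\ge T_1$) so that the entire segment $u_T$ lies below $B+1$.
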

\begin{proof}
 First,  in view of (\ref{hypincr}) there exists $\varepsilon>0$ such that $g(\varepsilon)\geq f(\varepsilon).$ By Lemma \ref{lem01} (i) $u(x,t)>0$ for all $t> 2\tau$ and $x\in \bar{\Omega}.$ Now, suppose that $u(x,t)\geq \varepsilon$ for $(x,t)\in \bar{\Omega}\times[3\tau,4\tau],$ thus for $t>4\tau$ we can easily show that, for $\underline{u}(x,t)=\varepsilon$ if $(x,t)\in \bar{\Omega}\times (4\tau,\infty) $ and $\phi^{-}(x,t)=\varepsilon$ if $(x,t)\in \bar{\Omega}\times [3\tau,4\tau],$  the function $\underline{u}(g, \phi^{-})$ is a sub-solution of (\ref{A}) for $t\geq 3\tau.$  Finally, according to  Lemma \ref{lem01} (ii), we may choose $(\underline{u},\bar{u})=(\varepsilon, B).$ The proof is reached by  Theorem \ref{th03}.
\end{proof}

\section{The case where the delayed term $g$ is non-monotone }
\subsection{Persistence and estimates of solutions}

The aim of this section is to state some fundamental results, including the strong persistence and the closed attractive intervals for solutions of problem (\ref{A})
in the case where $g$ is non-monotone.

We make the following assumption, that will be used from now on.

\noindent   There exists a positive constant $u^{*}$ such that,

\begin{equation}\label{hyp0}
\left\{
\begin{array}{lll}\vspace{0.2cm}
\min\limits_{\sigma\in[s,u^{*}]}g(\sigma)>f(s),\quad \mbox{for} \quad 0<s<u^{*},\\
\max\limits_{\sigma\in [u^{*},s]}g(\sigma)<f(s),\quad  \mbox{for} \quad u^{*}<s\leq B,
\end{array}
\right.
\end{equation}

\begin{equation}\label{hyp000}
\begin{array}{lll}
  f'(0)\; \mbox{and} \; g'(0)\; \mbox{with} \;g'(0)>f'(0)>0\;
\mbox{and}\;
 f(s)>f(0),\quad \forall s>0,
 \end{array}
\end{equation}

Clearly,  $u^{*}$ is the unique positive value that satisfies $g(u^{*})=f(u^{*}).$\

With the aim to prove the strong persistence and to obtain the attractive intervals for solutions of  (\ref{A}), we need to construct nondecreasing functions having some  properties in order to  apply the results of the previous section. This is the goal of the  next lemmas. Their proofs are given in \cite{Touaoula}, for the reader convenience we provide them in details.

\begin{Lemma}\label{lem2}
Suppose that (\ref{hyp0}), (\ref{hyp000}) hold. Then there exists a positive constant $0<m<x^{*}$ satisfying
\begin{equation}\label{hyp3}
\left\{
\begin{array}{lll}\vspace{0.2cm}
g(s)> f(m)\quad \mbox{for} \quad m\leq s\leq B,\\ \vspace{0.2cm}
f(s)<f(m)\quad \mbox{for}\quad 0\leq s<m,\\
f(s)>f(m)\quad \mbox{for}\quad m<s\leq B,
\end{array}
\right.
\end{equation}
and $f,$ $g$ are strictly increasing over $[0,m].$
\end{Lemma}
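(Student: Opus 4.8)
The plan is to locate $m$ as a point close to $0$ where $f$ has a strict interior minimum relative to the level it takes on $(0,m]$, and then to use the hypotheses to check the three inequalities in (\ref{hyp3}) together with the monotonicity claim. First I would record the consequences of (\ref{hyp000}): since $f'(0)>0$, the function $f$ is strictly increasing on some initial interval $[0,\delta_1]$; since $g'(0)>f'(0)>0$, likewise $g$ is strictly increasing on some $[0,\delta_2]$, and moreover $g(s)-f(s)=K(s)$ satisfies $K(0)=0$, $K'(0)=g'(0)-f'(0)>0$, so $K(s)>0$ on a punctured interval $(0,\delta_3]$; this is just the statement $g(s)>f(s)$ for $0<s\le\delta_3$, consistent with the first line of (\ref{hyp0}) near $0$. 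Also (\ref{hyp000}) gives $f(s)>f(0)$ for all $s>0$, which is exactly the second line of (\ref{hyp3}) once we know $f(m)$ is not too large — in fact $f(s)<f(0)<f(m)$ would be ideal but we only have $f(s)>f(0)$, so the second line of (\ref{hyp3}), "$f(s)<f(m)$ for $0\le s<m$", must instead follow from strict monotonicity of $f$ on $[0,m]$. So the real task is to choose $m$ small enough that $f$ is strictly increasing on $[0,m]$, which immediately yields the second and third lines of (\ref{hyp3}) on $[0,m]$ and on $(m,\delta_1]$ respectively, and then to propagate the third line ("$f(s)>f(m)$") all the way to $s=B$, and to establish the first line ("$g(s)>f(m)$ for $m\le s\le B$").

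For the propagation I would argue as follows. Set $M:=\min_{s\in[u^{*},B]} f(s)$; by (\ref{hyp0}), for every $s\in(u^{*},B]$ we have $f(s)>\max_{\sigma\in[u^{*},s]} g(\sigma)\ge g(u^{*})=f(u^{*})$, and combined with continuity on the compact interval $[u^{*},B]$ this shows $f(s)\ge f(u^{*})>0$ there, with strict inequality for $s>u^{*}$; in particular $\inf_{s\in[\varepsilon,B]} f(s)$ is a positive quantity that is bounded below, say by some $c_0>0$, for each fixed $\varepsilon>0$ (here one uses that $f$ is Lipschitz, hence continuous, and $f(0)=g(0)$ with $g(s)>g(0)$ for $s>0$ so $f$ does not return to the value $f(0)$ on $[u^{*},B]$; on $(0,u^{*})$ one uses the first line of (\ref{hyp0}) with $s$ fixed to get $f(s)<\min_{[s,u^{*}]}g\le g(u^{*})=f(u^{*})$, so actually $f(s)<f(u^{*})$ there, but $f(s)$ need not exceed $f(0)$... wait — (\ref{hyp000}) does guarantee $f(s)>f(0)$ for all $s>0$). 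Hence $\mu:=\inf_{s\in(0,B]} f(s)$ — wait, this could be $f(0)$ as a limit. The clean move: because $f'(0)>0$ and $f(s)>f(0)$ for $s>0$, the value $f(0)=\inf_{[0,B]}f$ is attained only at $0$; thus choosing $m>0$ small forces $f(m)$ to be strictly between $f(0)$ and $\inf_{s\in[m,B]} f(s)$ is... no. Let me restate the key point: I want $f(m)\le f(s)$ for all $s\in[m,B]$ with equality only at $s=m$; this is achievable because $f(0)<f(s)$ for all $s>0$ and $f$ is continuous and increasing near $0$, so for $m$ small the value $f(m)$ is below $\min_{s\in[m_0,B]}f$ for any fixed $m_0>0$, by taking $m<m_0$ with $f(m)<\min_{[m_0,B]}f$ — possible since $f(m)\to f(0)<\min_{[m_0,B]}f$ as $m\to 0^{+}$ (the min over $[m_0,B]$ is attained at some point $>0$, hence is $>f(0)$). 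Combined with $f$ strictly increasing on $[0,m_0]$ this gives the third line on all of $(m,B]$.

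The main obstacle — and the step deserving the most care — is the first line, $g(s)>f(m)$ for all $s\in[m,B]$. For this I would set $\gamma:=\min_{s\in[m_0,B]} g(s)$ for a suitable fixed $m_0\in(0,u^{*})$; since $g(s)>g(0)=f(0)$ for all $s>0$ and $g$ is continuous, $\gamma>f(0)$, so choosing $m$ small with $f(m)<\gamma$ handles $s\in[m_0,B]$. On the remaining sliver $s\in[m,m_0]$ one uses monotonicity: $g$ is strictly increasing on $[0,\delta_2]$, so taking $m_0\le\delta_2$ gives $g(s)\ge g(m)>f(m)$ for $s\in[m,m_0]$, where $g(m)>f(m)$ is the near-$0$ positivity of $K$ established above (valid once $m\le\delta_3$). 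So the whole construction is: fix $m_0:=\min(\delta_1,\delta_2,\delta_3,u^{*}/2)$, note $\alpha:=\min_{[m_0,B]}f$ and $\gamma:=\min_{[m_0,B]}g$ are both $>f(0)$, and then pick $m\in(0,m_0)$ small enough that $f(m)<\min(\alpha,\gamma)$ — possible since $f(m)\to f(0)$ — and that $m<x^{*}$. With this $m$, all three lines of (\ref{hyp3}) hold and $f,g$ are strictly increasing on $[0,m]\subset[0,m_0]$, by the definition of $\delta_1,\delta_2$. The only delicate verifications are the strict-versus-nonstrict bookkeeping at the endpoints $s=m$ and $s=B$, and ensuring the auxiliary quantities $\alpha,\gamma$ genuinely exceed $f(0)$, which rests squarely on (\ref{hyp000}) (for $f$) and on assumption \textbf{(T2)}/(\ref{hyp000})'s relatives (for $g$, via $g(s)>g(0)$).
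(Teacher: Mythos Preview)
Your final construction is correct and in fact cleaner than the paper's. Once the false starts are discarded, the argument is: set $m_0:=\min(\delta_1,\delta_2,\delta_3,u^{*}/2)$; since $f(s)>f(0)$ and $g(s)>g(0)=f(0)$ for every $s>0$, the quantities $\alpha:=\min_{[m_0,B]}f$ and $\gamma:=\min_{[m_0,B]}g$ both strictly exceed $f(0)$; then pick $m\in(0,m_0)$ with $f(m)<\min(\alpha,\gamma)$, which is possible by continuity of $f$ at $0$. The three inequalities in (\ref{hyp3}) and the monotonicity of $f,g$ on $[0,m]$ follow exactly as you check them, splitting $[m,B]$ at $m_0$.

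The paper proceeds differently, via an explicit two-stage construction. It first takes $\sigma_0$ so that $f,g$ are strictly increasing on $[0,\sigma_0]$, locates $\sigma_1\in[\sigma_0,u^{*}]$ realising $\gamma=\min_{[\sigma_0,u^{*}]}f$, pulls this level back to $m_1\in(0,\sigma_0]$ via $f(m_1)=f(\sigma_1)$, and verifies that $m_1-\varepsilon$ already satisfies the two $f$-inequalities in (\ref{hyp3}) (using (\ref{hyp0}) to handle $s\in(u^{*},B]$). It then sets $\alpha=\min_{[m_1-\varepsilon,B]}g$: if $f(m_1-\varepsilon)<\alpha$ one is done with $m=m_1-\varepsilon$; otherwise one pulls $\alpha$ back to $m_2\in(0,m_1-\varepsilon]$ with $f(m_2)=\alpha$ and takes $m=m_2-\varepsilon$, using the monotonicity of $g$ on $[0,\sigma_0]$ and the first line of (\ref{hyp0}) to get the $g$-inequality on $[m_2-\varepsilon,m_1-\varepsilon]$. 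Your route avoids this case distinction by simply pushing $m$ close enough to $0$ in one step, exploiting directly that both $f$ and $g$ stay strictly above $f(0)$ on any compact subinterval of $(0,B]$; the price is a smaller, less explicit $m$, but nothing downstream in the paper needs a quantitative $m$. Both arguments rely on the same implicit reading of (\ref{hyp000}) --- that $f'(0),g'(0)>0$ yields strict monotonicity on a right neighbourhood of $0$ --- so neither is more rigorous on that point.
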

\begin{proof}
 First, from (\ref{hyp000}) there exists $\sigma_0>0$ such that $f$ and $g$ are strictly increasing over $[0,\sigma_0].$ Let $\gamma>0$ be defined as
 \begin{equation}\label{m}
 \gamma:=\min\limits_{\sigma\in[\sigma_0,x^{*}]}f(\sigma)=f(\sigma_1).
 \end{equation}
  Since $f(\sigma_1)>f(0)$ then there exists $m_1\in (0,\sigma_0]$ such that $f(m_1)=f(\sigma_1).$ Note that for $\varepsilon>0$ so small, the positive constant $(m_1-\varepsilon)$ satisfies
\begin{equation*}
\left\{
\begin{array}{lll}\vspace{0.2cm}
f(s)< f(m_1-\varepsilon)\quad \mbox{for} \quad 0\leq s<m_1-\varepsilon,\\
f(s)>f(m_1-\varepsilon)\quad \mbox{for}\quad m_1-\varepsilon< s\leq B.
\end{array}
\right.
\end{equation*}
Indeed, since $f$ is strictly increasing over $[0,\sigma_0]$ and $m_1\leq \sigma_0$ it follows that $f(s)<f(m_1-\varepsilon)$  for all $s\in[0,m_1-\varepsilon)$ and  $f(s)>f(m_1-\varepsilon)$ for all $s\in(m_1-\varepsilon,\sigma_0].$ Further, for $\sigma_0\leq s\leq x^{*}$ and from (\ref{m}) we get

\begin{equation*}
f(s)\geq f(m_1)=f(\sigma_1),
\end{equation*}
thus,
\begin{equation*}
f(s)>f(m_1-\varepsilon).
\end{equation*}
Next, if $x^{*}<s\leq B$ so, in view of (\ref{hyp0})
\begin{eqnarray*}
f(s)&>&\max\limits_{\sigma\in[x^{*},s]}g(\sigma)\geq g(x^{*})=f(x^{*})>f(\sigma_1),
\end{eqnarray*}
from (\ref{m}), we have
\begin{eqnarray*}
f(s)>f(m_1)> f(m_1-\varepsilon).
\end{eqnarray*}
 We define  $\alpha:=\min\limits_{\sigma\in[m_1-\varepsilon,B]}g(\sigma).$ Hence, if $f(m_1-\varepsilon)<\alpha$ then  $m=(m_1-\varepsilon)$ satisfies (\ref{hyp3}), otherwise, there exists $m_2\in(0,m_1-\varepsilon]$ such that $f(m_2)=\alpha.$ Finally using the fact that $g$ is strictly increasing over $[m_2-\varepsilon,m_1-\varepsilon]$ and the first assertion of (\ref{hyp0}) we see that $m=(m_2-\varepsilon)$ satisfies (\ref{hyp3}). This completes the proof.
\end{proof}

Now  let us consider the following function,
  \begin{equation}\label{constr}
   g^{B}_m(s)=\left \{
\begin{array}{lll} \vspace{0.2cm}
g(s),& \quad \mbox{for}\;\ 0<s<\bar{m},\\
 f(m),& \quad  \mbox{for}\;\ \bar{m}<s\leq B,
\end{array}
\right.
\end{equation}
with $\bar{m}$ is the constant  satisfying $\bar{m}<m$ and $g(\bar{m})=f(m)$ and $m$ is defined in (\ref{hyp3}).\\
The following result is easily checked.
\begin{Lemma}\label{lem}
 Assume that (\ref{hyp0}), (\ref{hyp000}) hold. Then $g^{B}_m$ defined in (\ref{constr}) is a nondecreasing function over $(0,B)$ satisfying
 \begin{equation*}
   \left \{
\begin{array}{lll}\vspace{0.2cm}
g^{B}_m(s)\leq g(s), \quad 0\leq s\leq B,\\ \vspace{0.2cm}
 g^{B}_m(s)>f(s),\quad 0< s< m,\\
 g^{B}_m(s)<f(s),\quad m < s\leq B.
\end{array}
\right.
\end{equation*}
\end{Lemma}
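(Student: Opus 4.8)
The plan is to verify each of the three listed properties of $g^{B}_m$ directly from the definition \eqref{constr}, using the constant $m$ furnished by Lemma \ref{lem2} (i.e.\ satisfying \eqref{hyp3}) and the auxiliary constant $\bar m<m$ defined by $g(\bar m)=f(m)$. First I would check that $\bar m$ is well defined: since $g$ is strictly increasing on $[0,m]$ by Lemma \ref{lem2} and $g(m)>f(m)$ (take $s=m$ in the first line of \eqref{hyp3}) while $g(0)=f(0)<f(m)$ (here $f(0)<f(m)$ follows from the second line of \eqref{hyp3} with $s=0$, or directly from \eqref{hyp000}), the intermediate value theorem gives a unique $\bar m\in(0,m)$ with $g(\bar m)=f(m)$. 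This makes \eqref{constr} unambiguous, and it also shows $g^{B}_m$ is continuous at $\bar m$, hence continuous on $(0,B)$.

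Next I would establish monotonicity. On $(0,\bar m)$ we have $g^{B}_m=g$, which is strictly increasing there because $\bar m<m$ and $g$ is increasing on $[0,m]$; on $(\bar m,B)$ the function is the constant $f(m)$; and the two pieces agree at $\bar m$ by the choice of $\bar m$. Hence $g^{B}_m$ is nondecreasing on $(0,B)$. Then I would verify the three inequalities. For $g^{B}_m(s)\le g(s)$ on $[0,B]$: on $[0,\bar m]$ it is an equality, and on $[\bar m,B]$ we have $g^{B}_m(s)=f(m)=g(\bar m)\le g(s)$ since $g$ is increasing on $[\bar m,m]$ and, for $s\ge m$, $g(s)\ge f(m)$ by the first line of \eqref{hyp3}. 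For $g^{B}_m(s)>f(s)$ on $(0,m)$: if $s\in(0,\bar m)$ then $g^{B}_m(s)=g(s)$ and $g(s)>f(\bar m-\text{?})$—more carefully, $g(s)>f(m)>f(s)$, the first inequality because $g(s)\ge g(\bar m)=f(m)$ is not quite what we want since we need strictness, so instead note $g(s)=g^{B}_m(s)$ and use that on $(0,m)$, $f(s)<f(m)=g(\bar m)\le g(s)$ with strictness coming from $f(s)<f(m)$ (second line of \eqref{hyp3}) when $s<m$; if $s\in[\bar m,m)$ then $g^{B}_m(s)=f(m)>f(s)$ again by the second line of \eqref{hyp3}. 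For $g^{B}_m(s)<f(s)$ on $(m,B]$: here $g^{B}_m(s)=f(m)<f(s)$ by the third line of \eqref{hyp3}.

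The only genuinely delicate point is the strictness in the middle inequality on the subinterval $(0,\bar m)$, where $g^{B}_m=g$; there one must not argue via $g\ge f(m)$ (which would only give $\ge$) but rather compare $f(s)$ with $f(m)$ and then $f(m)$ with $g(s)$, observing $f(s)<f(m)\le g(\bar m)\le g(s)$, so that the first strict inequality does the work. Everything else is a routine case check, so I expect no real obstacle; the lemma is, as stated, "easily checked," and the proof is essentially bookkeeping over the intervals $[0,\bar m]$, $[\bar m,m]$, $[m,B]$ using \eqref{hyp3} and the monotonicity of $g$ on $[0,m]$.
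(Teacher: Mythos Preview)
Your proof has a genuine error in the verification of $g^{B}_m(s)>f(s)$ on the subinterval $(0,\bar m)$. You write the chain $f(s)<f(m)=g(\bar m)\le g(s)$, but the last inequality is backwards: since $g$ is strictly increasing on $[0,m]$ and $s<\bar m$, we have $g(s)<g(\bar m)$, not $g(s)\ge g(\bar m)$. So the argument collapses precisely where you flag it as ``delicate.'' The properties recorded in \eqref{hyp3} alone, together with monotonicity of $f,g$ on $[0,m]$ and $f(0)=g(0)$, do \emph{not} force $g(s)>f(s)$ for all $s\in(0,\bar m)$; one can imagine increasing $f,g$ with $f(0)=g(0)$, $g(m)>f(m)$, yet $g<f$ somewhere in $(0,m)$.

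The fix is to invoke \eqref{hyp0} directly, which is among the hypotheses of the lemma. For $0<s<\bar m<m<u^{*}$ one has
\[
g^{B}_m(s)=g(s)\ \ge\ \min_{\sigma\in[s,u^{*}]}g(\sigma)\ >\ f(s),
\]
the strict inequality being exactly the first line of \eqref{hyp0}. With this correction the remaining case analysis you give (monotonicity, the inequality $g^{B}_m\le g$, the case $s\in[\bar m,m)$ via $f(m)>f(s)$, and the case $s\in(m,B]$ via $f(m)<f(s)$) is fine, and the lemma is indeed ``easily checked'' as the paper asserts without giving a proof.
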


Now we are in position to prove the strong persistence of solutions of (\ref{A}).
\begin{Lemma}
 Assume that (\ref{hyp0}), (\ref{hyp000})  hold.  Then the solution of problem (\ref{A}) is strongly persistent provided the corresponding initial data  $\phi \in X_{+}\setminus\{0\}.$
\end{Lemma}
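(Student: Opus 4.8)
The plan is to sandwich the solution $u$ between a sub-solution built from the auxiliary nondecreasing function $g^{B}_m$ and a super-solution built from $g^{+}(s):=\max_{v\in[0,s]}g(v)$, and then invoke the monotone-case convergence results (Theorem~\ref{perincr}, via Theorem~\ref{th03}) to push the lower bound up above a fixed positive level. First I would recall from Lemma~\ref{lem01}(i) that $u(x,t)>0$ on $\bar{\Omega}\times(2\tau,\infty)$ and from Lemma~\ref{lem01}(ii) that the orbit of $\phi$ is attracted by a compact set, so that in particular $\limsup_{t\to\infty}\|u(\cdot,t)\|\le B$; hence after some time $t_0$ we have $0<u(x,t)\le B+\eta$ for every $x$, and by the persistence part of the analysis (using $\underline{u}\equiv 0$ as in Lemma~\ref{lem00}) the solution enters the region where the comparison with $g^{B}_m$ is legitimate, i.e.\ where $g^{B}_m\le g$ holds.

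The core step is the following. Consider the scalar delay equation
\begin{equation*}
\underline{v}'(t)=-f(\underline{v}(t))+\int_0^{\tau}h(a)\,g^{B}_m(\underline{v}(t-a))\,da ,
\end{equation*}
which, by Lemma~\ref{lem}, has $g^{B}_m$ nondecreasing on $(0,B)$ with $g^{B}_m(s)>f(s)$ for $0<s<m$ and $g^{B}_m(s)<f(s)$ for $m<s\le B$; thus $s=m$ plays the role of the "$u^{*}$" in hypothesis (\ref{hypincr}) for this modified problem. Since $g^{B}_m\le g$ on $[0,B]$, any spatially-constant solution $\underline{v}$ of this scalar equation, viewed as $\underline{u}(x,t)=\underline{v}(t)$, is a sub-solution of (\ref{A}) in the sense of Definition~2.3 (with $g^{-}=g^{B}_m$), provided we start it below $u$ at time $t_0$, which we may do by choosing the constant initial datum $\phi^{-}\equiv \delta$ with $\delta>0$ small enough that $\delta\le u(x,t)$ on $\bar\Omega\times[t_0-\tau,t_0]$. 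By Proposition~\ref{pr0} we then get $u(x,t)\ge \underline{v}(t)$ for all $x$ and all $t\ge t_0$. Now I apply Theorem~\ref{perincr} (or directly Theorem~\ref{th03} with $(\underline{u},\bar{u})=(\delta,B)$ and the stationary problem for $g^{B}_m$, whose unique positive equilibrium is $m$ because $g^{B}_m$ is nondecreasing and crosses $f$ only at $m$) to conclude $\underline{v}(t)\to m$ as $t\to\infty$. Consequently
\begin{equation*}
\liminf_{t\to\infty}\ \min_{x\in\bar\Omega} u(x,t)\ \ge\ m\ >\ 0 ,
\end{equation*}
and combined with the already-established upper bound $\limsup_{t\to\infty}\|u(\cdot,t)\|\le B$ this is exactly strong persistence.

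The main obstacle, I expect, is not the convergence argument itself but the bookkeeping needed to legitimately invoke the comparison principle: Definition~2.3 requires an \emph{ordered} pair of sub/super-solutions whose delay-functions bracket $g$ only on a common interval $[a,b]$, and one must check that the orbit, after the transient governed by Lemma~\ref{lem01}, actually lies in $[\bar m,B]$ (or at least in a region containing the range of $\underline{v}$) so that $g^{B}_m(\underline{v})\le g(\underline{v})$ and $g(u)\le g^{+}(u)$ are both valid along the relevant trajectories. This forces the argument to be run on $[t_0,\infty)$ rather than $[0,\infty)$, with the segment on $[t_0-\tau,t_0]$ serving as the new initial datum; one also has to observe that shifting the starting time does not affect the hypotheses, since (\ref{A}) is autonomous. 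Once the sub-solution is in place on the correct time interval and Lemma~\ref{lem} guarantees its monotone structure, the convergence $\underline{v}(t)\to m$ is immediate from the cited theorems, and the conclusion follows.
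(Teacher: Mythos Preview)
Your approach is correct and follows the same core idea as the paper's: use the nondecreasing minorant $g^{B}_m$ of Lemma~\ref{lem} to build a sub-solution governed by a monotone equation, then invoke Theorem~\ref{perincr} (with $m$ playing the role of $u^{*}$) to force $\liminf_{t\to\infty} u(x,t)\ge m>0$. The paper's execution is more direct: it takes $v$ to be the solution of the full reaction--diffusion problem~(\ref{A}) with $g$ replaced by $g^{B}_m$ and with the \emph{same} initial datum $\phi$; then $v=\underline{u}(g^{B}_m,\phi)$ is a sub-solution by construction, Proposition~\ref{pr0} gives $v\le u$, and Theorem~\ref{perincr} applied to $v$ yields $v(x,t)\to m$. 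Your detour through a time shift to $t_0$ and a spatially constant ODE sub-solution with initial datum $\delta$ is valid but unnecessary, and your stated concern that one must first confine the orbit to $[\bar m,B]$ before the comparison is legitimate is a misconception: Lemma~\ref{lem} already gives $g^{B}_m(s)\le g(s)$ on all of $[0,B]$, so the bracketing $g^{B}_m\le g\le g^{+}$ required by Definition~2.1 holds on the whole interval $[0,B]$ (or $[0,L]$ for any $L\ge B$) from $t=0$ with the original $\phi$.
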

\begin{proof}
 Set $g^{-}=g_{m}^{B}$ with $g^{B}_m$ is defined in  (\ref{constr}), then $\underline{u}(g^{-},\phi)$ is a sub-solution of (\ref{A}). In view of Proposition \ref{pr0}, we have $v(x,t)\leq u(x,t)$ for all $(x,t)\in \bar{\Omega}\times [0,\infty).$ Since $g^{B}_m$ is a  nondecreasing function, then the result is reached by applying Theorem \ref{perincr}.
\end{proof}

\noindent In the following,  we focus on functions $g$ having a maximum. More precisely, assume that the function $g$ satisfies :\\

There exists a positive constant $M$ such that,
\begin{equation}\label{hyp4}
\begin{array}{lll}
g(M)=\max\limits_{s\in \mathbb{R}^+}{g(s)}.
\end{array}
\end{equation}

We will investigate two cases, namely, $ u^{*}\leq M$ and $u^{*}>M.$

In order to state our next result we need the following lemma,
\begin{Lemma}\label{lem3}
Under the hypotheses (\ref{hyp0}), (\ref{hyp4}). Assume also that $u^{*}\in[0^{*},M^{*}]_{C}.$  Then the interval $[0^{*},M^{*}]_{X}$ attracts every solution $u$ of problem (\ref{A}), equivalently,  there exists $T>0$ such that
\begin{equation*}
0\leq u(x,t)\leq M, \;\ \forall  x\in \bar{\Omega}, \;\  t\geq T.
\end{equation*}
\end{Lemma}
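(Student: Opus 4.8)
The assertion is that once $u^{*}\le M$, every solution eventually enters $[0,M]$. Since the lower bound $u\ge 0$ (indeed strong positivity for $t>2\tau$) is already established in Lemma \ref{lem00} and Lemma \ref{lem01}, the whole content is the upper estimate $\limsup_{t\to\infty}\max_{x}u(x,t)\le M$. The natural route is a \emph{super-solution} argument, exactly parallel to the proof of Lemma \ref{lem01}(ii): dominate $u$ by the solution $\bar u(t)$ of the spatially homogeneous ODE with distributed delay, and show that $\bar u$ itself satisfies $\limsup_{t\to\infty}\bar u(t)\le M$.

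**Step 1: reduce to an ODE.** Put $\psi(t):=\max_{x\in\bar\Omega}\phi(x,t)$ for $t\in[-\tau,0]$, and let $g^{+}(s):=\max_{v\in[0,s]}g(v)$ as before; note that under (\ref{hyp4}) one has $g^{+}(s)=g(s)$ for $s\le M$ and $g^{+}(s)=g(M)$ for $s\ge M$, so $g^{+}$ is bounded by $g(M)$ everywhere and is nondecreasing. Let $\bar u$ solve
\begin{equation*}
\bar u'(t)=-f(\bar u(t))+\int_0^{\tau}h(a)\,g^{+}(\bar u(t-a))\,da,\qquad \bar u(t)=\psi(t),\ t\in[-\tau,0].
\end{equation*}
As in Lemma \ref{lem01}, $\bar u(g^{+},\psi)$ is a super-solution of (\ref{A}), so by Proposition \ref{pr0}, $u(x,t)\le\bar u(t)$ for all $(x,t)\in\bar\Omega\times[0,\infty)$. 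It therefore suffices to prove $\limsup_{t\to\infty}\bar u(t)\le M$.

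**Step 2: the fluctuation argument.** Suppose to the contrary that $l:=\limsup_{t\to\infty}\bar u(t)>M$. By (\cite{Thieme0}, Proposition A.22) choose $t_n\to\infty$ with $\bar u(t_n)\to l$ and $\bar u'(t_n)\to 0$. By definition of $\limsup$ and monotonicity of $g^{+}$, for every $a\in[0,\tau]$ we have $\limsup_n g^{+}(\bar u(t_n-a))\le g^{+}(l)=g(M)$ (here we use that $l>M$ forces $g^{+}(l)=g(M)$, but in fact $g^{+}(s)\le g(M)$ for all $s$). Passing to the limit in the ODE along $t_n$ and using Fatou (or dominated convergence, since $g^{+}$ is bounded), together with $\int_0^\tau h(a)\,da=1$, gives
\begin{equation*}
0\le -f(l)+g(M).
\end{equation*}
On the other hand, $l>M\ge u^{*}$, so by the second line of (\ref{hyp0}), $f(l)>\max_{\sigma\in[u^{*},l]}g(\sigma)\ge g(M)$ (the last inequality because $M\in[u^{*},l]$ and $g(M)=\max_{s}g(s)$). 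This contradicts $0\le -f(l)+g(M)$, proving $\limsup_{t\to\infty}\bar u(t)\le M$, hence $\limsup_{t\to\infty}\max_x u(x,t)\le M$. Combined with $u\ge 0$ this yields the desired $T$.

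**Main obstacle.** The only subtle point is justifying the limit exchange inside $\int_0^\tau h(a)\,g^{+}(\bar u(t_n-a))\,da$: we do not know $\bar u(t_n-a)$ converges, only that its $\limsup$ is $\le l$. Boundedness of $g^{+}$ by $g(M)$ plus continuity of $g^{+}$ lets one apply the reverse Fatou lemma ($\limsup$ of the integral $\le$ integral of the $\limsup$) to obtain the $\le g(M)$ bound cleanly; this is exactly the step performed in (\ref{lim1})--(\ref{V0}) of Lemma \ref{lem01}, so it is really just a matter of transcribing that argument with $M$ in place of $B$. Everything else — the super-solution comparison and the one-line use of (\ref{hyp0}) with $M\in[u^{*},l]$ — is routine.
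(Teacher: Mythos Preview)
Your argument is correct and follows essentially the same strategy as the paper: compare $u$ with the solution $v$ of the scalar delay ODE driven by the monotone envelope $g^{+}(s)=\max_{\sigma\in[0,s]}g(\sigma)$, then show $\limsup_{t\to\infty}v(t)\le M$. The only difference is cosmetic: the paper splits into two sub-cases, using an upward-crossing argument when $u^{*}<M$ (if $v(\bar t)=M$ with $v'(\bar t)\ge 0$ one gets $f(M)\le g(M)$, contradicting (\ref{hyp0})) and falling back on the $\limsup$ argument of Lemma~\ref{lem01}(ii) when $u^{*}=M$; your single fluctuation argument handles both cases at once. One small point worth making explicit is that $l\le B$, needed to invoke the second line of (\ref{hyp0}), is already available from the proof of Lemma~\ref{lem01}(ii).
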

\begin{proof}

Let $\bar{g}(s)=\max\limits_{\sigma\in[0,s]}g(\sigma),$ and we consider the following problem

 \begin{equation*}
 \left\{
\begin{array}{lll} \vspace{0.2cm}
v'(t)= -f(v(t))+\int_0^{\tau}h(a)\bar{g}\big(v(t-a)\big)da,\quad \mbox{for}\;\ t>0,\\
v(t)=\psi(t):=\max\limits_{x\in\bar{\Omega}}\phi(x,t) \quad \mbox{for}\;\ -\tau \leq t\leq 0.
\end{array}
\right.
\end{equation*}

Thus, $\bar{u}(g^{+},\psi)$ is a super-solution of (\ref{A}) with $\bar{u}=v$ and $g^{+}=\bar{g},$ Proposition \ref{pr0} leads to   $u(x,t)\leq v(t)$ for all $(x,t)\in \bar{\Omega}\times[0,\infty).$ Next we claim that there exists $T>\tau$ such that $v(t)\leq M$ for all $t\geq T.$ By contradiction, we suppose that there exists a positive constant $\bar{t}>T$ such that $v(\bar{t})=M$ and $v'(\bar{t})\geq 0,$ then, on one hand, and in view of (\ref{hyp0}), we have
\begin{equation}\label{M1}
g(M)\leq f(M).
\end{equation}
First suppose that $u^{*}<M$ so,
\begin{equation}\label{M11}
g(M)< f(M).
\end{equation}
  On the other hand,
   \begin{eqnarray*}
   0&\leq &-f(M)+\int_0^{\tau}h(a)\bar{g}(v(\bar{t}-a))da,
   \end{eqnarray*}
   consequently, we arrive at
  \begin{eqnarray*}
  f(M)\leq g(M),
   \end{eqnarray*}
   this is a contradiction with (\ref{M11}). Now if $u^{*}=M$ it follows that
   \begin{equation}\label{0}
   \max\limits_{v\in[0,s]}g(v)=g(u^{*}), \quad \forall s>u^{*}.
   \end{equation}
   Observe that, from the second assertion of (\ref{hyp0}) we get $g(u^{*})<f(s)$ for all $u^{*}<s\leq B,$ thus combining this with (\ref{0}) we conclude that
  \begin{equation}\label{M2}
   \max\limits_{v\in[0,s]}g(v)<f(s), \quad \forall s>u^{*}.
   \end{equation}
      Therefore according to Lemma \ref{lem01} (ii) (substituting the hypothesis in \textbf{(T2)} by (\ref{M2})) we show that  $\limsup\limits_{t\rightarrow \infty}v(t)\leq v^{*}.$  The result is reached similarly  as in the proof of Lemma \ref{lem01} (ii).
   \end{proof}

In the rest of this section we focus on $u^{*}>M.$ We impose some additional hypotheses on $f$ and $g$.

Assume that
\begin{equation}\label{hyp2}
\left\{
\begin{array}{lll}\vspace{0.3cm}
f(s)<f(M)&\quad \mbox{for}\quad 0\leq s<M,\\
f(s)>f(M)&\quad \mbox{for}\quad M<s\leq B.
\end{array}
\right.
\end{equation}

Now, to avoid any possibility of infinitely oscillation of $g$ around $f(M)$, we will assume that $g$ satisfies the next hypotheses: \

Setting
$$
\overline{\mathbb{D}}\equiv \{s\in [0, M]:\;g(s)=f(M)\},
$$

\begin{equation}\label{r}
\mbox{ there exists} \quad \bar{m}\in (0,M), \quad \mbox{such that} \quad  g(\bar{m})=f(M) \quad \mbox{and} \quad \bar{m}=\max \overline{\mathbb{D}}.
\end{equation}
Observe that $\overline{\mathbb{D}}\neq \emptyset$ since $g(0)=f(0)<f(M)<g(M).$

In the same way we define the set
$$
\underline{\mathbb{D}}\equiv \{s\in [M, B]:\;g(s)=f(M)\}.
$$
The rest of this subsection is devoted to estimating the solutions of (\ref{A}) into two different situations namely, either $\underline{\mathbb{D}}=\emptyset$ or  $\min \underline{\mathbb{D}}$ exists.  The following lemma  deals with the first case.

 \begin{Lemma}\label{lem1}
 Assume that $\underline{\mathbb{D}}=\emptyset$ and $u^{*}>M$. We also suppose that (\ref{hyp0}), (\ref{hyp000}),  (\ref{hyp2}), (\ref{r}) hold.
 Then the interval $[M^{*},B^{*}]_{X}$ attracts every solution $u$ of problem (\ref{A}).
 \end{Lemma}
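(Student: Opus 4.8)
The strategy is to sandwich $u$ between a sub-solution that pushes it above $M$ (asymptotically) and a super-solution that keeps it below $B$ (asymptotically), using auxiliary monotone delay-functions built by truncation, and then to invoke Theorem \ref{perincr} on the resulting monotone scalar problems. First I would recall that we already know from Lemma \ref{lem01}(ii) that $\limsup_{t\to\infty} u(x,t)\le B$ uniformly in $x$; so after some time $T_1$ the solution lies in $[0,B]$, and it suffices to analyze the dynamics in the box $[0^{*},B^{*}]_X$. The key observation is that since $\underline{\mathbb{D}}=\emptyset$, the continuous function $g$ never equals $f(M)$ on $[M,B]$, and because $g(M)>f(M)$ (from $g(M)=\max g > f(0)=g(0)$ and the definition of $f(M)$, together with $u^*>M$ forcing $g(M)>f(M)$ via \eqref{hyp0}), we get by connectedness $g(s)>f(M)$ for all $s\in[M,B]$. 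Likewise, from \eqref{r} and the fact that $\bar m=\max\overline{\mathbb D}$, we have $g(s)>f(M)$ for all $s\in(\bar m,M]$ as well (any value where $g$ dips back to $f(M)$ beyond $\bar m$ is excluded by maximality, and $g(M)>f(M)$ rules out $g<f(M)$ throughout by the intermediate value theorem). Hence $g(s)>f(M)$ for all $s\in(\bar m,B]$.

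Next I would construct a nondecreasing lower delay-function $g^{-}$ that agrees with a truncation of $g$ near $\bar m$ and is capped below $f(M)$-level behavior so that $g^- \le g$ on $[0,B]$, in the spirit of $g^B_m$ from \eqref{constr}: set $g^{-}(s)=g(s)$ for $0\le s<\bar m$ (where, by Lemma \ref{lem2}, $g$ is increasing on an initial interval, and one shrinks $\bar m$ if necessary to stay in that monotone range — or more carefully one takes $g^-(s)=\min_{\sigma\in[s,\bar m]}g(\sigma)$ for $s\le\bar m$ to force monotonicity) and $g^{-}(s)=f(M)$ for $\bar m\le s\le B$. This $g^-$ is nondecreasing on $(0,B)$, satisfies $g^-\le g$ there, and the scalar equation
\begin{equation*}
\underline v'(t)=-f(\underline v(t))+\int_0^{\tau}h(a)\,g^{-}(\underline v(t-a))\,da
\end{equation*}
has $f(m')=g^-(m')$ only at the point where... — more precisely, using \eqref{hyp2} one checks $g^-(s)>f(s)$ for $0<s<M$ and $g^-(s)=f(M)<f(s)$ for $M<s\le B$, so $M$ is the unique positive equilibrium of this auxiliary problem and \eqref{hypincr} holds for it with $u^*$ replaced by $M$. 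Then $\underline u(g^-,\phi)$ is a sub-solution of \eqref{A}, Proposition \ref{pr0} gives $u(x,t)\ge \underline v(t)$, and Theorem \ref{perincr} applied to the auxiliary problem yields $\underline v(t)\to M$, hence $\liminf_{t\to\infty}u(x,t)\ge M$ uniformly. Combined with the upper bound $B$ this gives that $[M^{*},B^{*}]_X$ attracts every solution.

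The main obstacle I anticipate is the careful verification that the truncated function $g^-$ is genuinely nondecreasing and simultaneously satisfies $g^- \le g$ on all of $[0,B]$ while still having $M$ as its unique equilibrium with the sign conditions \eqref{hypincr}: near $s=\bar m$ one must reconcile "$g^-(s)=g(s)$ for $s<\bar m$" with "$g^-$ nondecreasing", which forces using $g^-(s)=\min_{\sigma\in[s,\bar m]}g(\sigma)$ and then arguing via \eqref{hyp2} and the location of $\bar m$ relative to the increasing range of $f$ from Lemma \ref{lem2} that the strict inequality $g^-(s)>f(s)$ survives on $(0,M)$ and in particular that no spurious equilibrium is created on $(\bar m, M)$. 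A secondary point requiring attention is establishing $g(s)>f(M)$ on $(\bar m,B]$ rigorously from $\underline{\mathbb D}=\emptyset$, $\bar m=\max\overline{\mathbb D}$, $g(M)=\max g$, and $u^*>M$ — this is where the hypothesis $\underline{\mathbb D}=\emptyset$ is essential, since it prevents $g$ from crossing $f(M)$ again on $[M,B]$ and thus keeps the super-solution comparison ($u\le B$) and the sub-solution comparison compatible. Once these two structural facts are in place, the convergence statement is a direct application of the already-proved Theorem \ref{perincr} together with Proposition \ref{pr0}.
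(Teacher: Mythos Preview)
Your plan is correct and follows essentially the same route as the paper: establish $g(s)>f(M)$ on $(\bar m,B]$ from $\underline{\mathbb D}=\emptyset$ and the maximality of $\bar m$, build a nondecreasing minorant $g^{-}$ equal to $\min_{\sigma\in[s,\,\cdot\,]}g(\sigma)$ below $\bar m$ and to $f(M)$ above, verify via \eqref{hyp0} and \eqref{hyp2} that $g^{-}$ has $M$ as its unique positive equilibrium, and then use the sub-solution comparison plus the monotone convergence result. The only cosmetic differences are that the paper minimizes over $[s,M]$ rather than $[s,\bar m]$ (equivalent here since $\min_{[\bar m,M]}g=f(M)$), takes the ODE initial datum $\psi(t)=\min_{x\in\bar\Omega}\phi(x,t)$ to guarantee $\psi\le\phi$, and closes with the observation that $[M^{*},B^{*}]_X$ is positively invariant before invoking the compact-attractor Lemma~\ref{lem01}(ii); you should add these minor points when writing out the details.
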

 \begin{proof}
 It is clear that  $\underline{\mathbb{D}}=\emptyset$ implies,
  \begin{equation}\label{er}
 g(s)> f(M) \quad \mbox{for all}\quad s \in [M,B].
 \end{equation}
   In view of (\ref{r}) and the fact that $g(M)>f(M)$ observe that the value $\bar{m}$ defined in (\ref{r})  satisfies
   \begin{equation}\label{rr}
   \min\limits_{\sigma \in [\bar{m},M]}g(\sigma)=f(M),
   \end{equation}
otherwise $f(\bar{\sigma}):=\min\limits_{\sigma \in [\bar{m},M]}g(\sigma)<f(M)$ with $\bar{\sigma}\in(\bar{m},M).$ Further since $g(M)>f(M)$ then there exists $\bar{\bar{\sigma}} \in (\bar{\sigma},M)$ such that $g(\bar{\bar{\sigma}})=f(M),$ which contradicts (\ref{r}).

 Next, we introduce the following function,
 \begin{equation} \label{er1}
 g_M^{B}(s)=\left\{
 \begin{array}{lll}\vspace{0.3cm}
 \min\limits_{\sigma\in [s,M]}g(\sigma),& \quad \mbox{for} \quad 0<s<\bar{m},\\
 f(M),& \quad \mbox{for} \quad \bar{m}<s\leq B, \\
 \end{array}
 \right.
 \end{equation}
\vspace{0.2cm}\\

 we claim that the function $g^B_M$ is nondecreasing and satisfies,\vspace{0.2cm}

 \begin{equation}\label{gA}
 \left\{
 \begin{array}{lll}\vspace{0.2cm}
 g_M^{B}(s)\leq g(s),&\quad \mbox{for} &\quad 0\leq s \leq B,\\ \vspace{0.2cm}
  g_M^{B}(s)>f(s),&\quad \mbox{for} &\quad 0<s<M,\\
  g_M^{B}(s)<f(s),&\quad \mbox{for} &\quad M<s\leq B.
 \end{array}
 \right.
 \end{equation}
In fact, from (\ref{er}), (\ref{rr}) and (\ref{er1}) it is easily checked that $g_M^{B}$ is nondecreasing and $g_M^{B}(s)\leq g(s)$ for all $s\in[0,B],$
  further, we first take $0<s<\bar{m},$ then using the fact that $M<x^{*}$ we have
 \begin{eqnarray*}
 g_M^{B}(s)&=&\min\limits_{\sigma\in [s,M]}g(\sigma),\\
 &\geq & \min\limits_{\sigma\in [s,x^{*}]}g(\sigma),
 \end{eqnarray*}
 in view of (\ref{hyp0}) we get
 \begin{eqnarray*}
 g_M^{B}(s)>f(s).
 \end{eqnarray*}
 For $\bar{m}\leq s< M,$ the hypothesis (\ref{hyp2}) implies that,
 \begin{eqnarray*}
 g_M^{B}(s)=f(M)> f(s).
 \end{eqnarray*}
 Finally for $M<s\leq B$ using again (\ref{hyp2}) we obtain
 \begin{eqnarray*}
 g_M^{B}(s)=f(M)<f(s).
 \end{eqnarray*}

 Next, we consider the following problem,
  \begin{equation}\label{K2}
\left\{
\begin{array}{lll}\vspace{0.2cm}
v'(t)=-f(v(t))+\int_0^{\tau}h(a)g_M^{B}(v(t-a))da,&\quad t>0,\\
v(t)=\psi(t):=\min\limits_{x\in \bar{\Omega}}\phi(x,t),&\quad -\tau\leq t\leq 0.
\end{array}
\right.
\end{equation}

 So, $\underline{u}(g^{-},\psi)$ is a sub-solution of (\ref{A}) with $\underline{u}=v$ and $g^{-}=g_M^{B}.$  By Proposition \ref{pr0} we get $v(t)\leq u(x,t)\leq B$ for all $(x,t)\in \bar{\Omega}\times [0,\infty).$  Moreover, since  $g_M^{B}$ is a nondecreasing function and the problem (\ref{K2}) admits only $M$ as positive constant steady state,  then from Theorem \ref{th03}, $v(t)$ goes to $M$ as $t$ tends to infinity. As a conclusion,
$$\liminf\limits_{t\rightarrow \infty}u(x,t)\geq M \;\ \mbox{for all}\;\ x\in \bar{\Omega}.$$
In addition observe that  $[M^{*},B^{*}]_{X}$ is an invariant closed interval for the system (\ref{A}), that is, for $\phi \in [M^{*},B^{*}]_{X}$ we have $u\in [M^{*}, B^{*}]_{X}$ \big(this result is a direct consequence of Proposition \ref{pr0} and the fact that $\underline{u}(g^{-},M)$ with $\underline{u}=M$ and $g^{-}(.)=g_M^{B}(.)$ is a sub-solution of (\ref{A})\big). Finally,  the proof is completed by Lemma \ref{lem01} (ii).
\end{proof}

Now, we will give an interest to the case where $\min\underline{\mathbb{D}}$ exists.

\noindent   In this context, and throughout the rest of paper, we define the constant  $\mathbf{A}$ as

\begin{equation}\label{Ai}
    \mathbf{A}:=\min\big\{s\in(M,B],\; g(s)=f(M)\big\},
     \end{equation}
     and $M$ is defined in (\ref{hyp4}).  The number $\mathbf{A}$ plays a crucial role in estimating the solution of problem \ref{A}

     We suppose that
 \begin{equation}\label{Hypsupl}
\left\{
\begin{array}{lll}\vspace{0.3cm}
f(s)<f(\mathbf{A})&\quad \mbox{for}\quad u^{*}\leq s<\mathbf{A},\\
f(s)>f(\mathbf{A})&\quad \mbox{for}\quad \mathbf{A}<s\leq B.
\end{array}
\right.
 \end{equation}

     The following lemma gives the estimates of solutions of (\ref{A}).
 \begin{Lemma} \label{lm}
 Suppose that (\ref{hyp0}), (\ref{hyp000}), (\ref{hyp2}), (\ref{Hypsupl}) are fulfilled. Assume also that $u^{*}>M$ and
 \begin{equation}\label{cond}
 f(\mathbf{A})\geq  g(M).
 \end{equation}
 Then the interval $[M^{*},\mathbf{A}^{*}]_{X}$ attracts every solution $u$ of problem (\ref{A}).
 \end{Lemma}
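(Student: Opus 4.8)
The plan is to reduce the statement to the two one-sided estimates
\[
\limsup_{t\to\infty}u(x,t)\le\mathbf A\qquad\text{and}\qquad\liminf_{t\to\infty}u(x,t)\ge M,\qquad x\in\bar\Omega,
\]
and to prove them in this order, because the first estimate is exactly what makes the sub-solution used in the second one admissible.

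For the upper estimate I would put $\bar g(s):=\max_{\sigma\in[0,s]}g(\sigma)$ and let $v$ solve $v'(t)=-f(v(t))+\int_0^\tau h(a)\bar g(v(t-a))\,da$ with history $v(t)=\max_{x\in\bar\Omega}\phi(x,t)$ on $[-\tau,0]$. Then $\bar u(\bar g,\psi):=v$ is a super-solution of (\ref{A}), so Proposition \ref{pr0} gives $u(x,t)\le v(t)$, and, exactly as in the proof of Lemma \ref{lem01}(ii) (using \textbf{(T2)}), $v$ is bounded with $\limsup_{t\to\infty}v(t)\le B$. Then I argue by contradiction: if $l:=\limsup_{t\to\infty}v(t)>\mathbf A$, then $\bar g(l)=g(M)$ by (\ref{hyp4}) (since $l>\mathbf A>M$), while (\ref{Hypsupl}) and (\ref{cond}) give $f(l)>f(\mathbf A)\ge g(M)=\bar g(l)$. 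Choosing, by (\cite{Thieme0}, Proposition A.22), a sequence $t_n\to\infty$ with $v(t_n)\to l$ and $v'(t_n)\to 0$, and passing to the limit in the equation for $v$ (using the monotonicity and continuity of $\bar g$ to bound $\limsup_n\int_0^\tau h(a)\bar g(v(t_n-a))\,da\le\bar g(l)$), one gets $0\le-f(l)+\bar g(l)<0$, a contradiction. Hence $\limsup v\le\mathbf A$, and so $\limsup_{t\to\infty}u(x,t)\le\mathbf A$ uniformly in $x$.

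For the lower estimate I would fix a small $\varepsilon>0$; by the upper estimate there is $t_\varepsilon>3\tau$ with $0\le u(x,t)\le\mathbf A+\varepsilon$ for all $x$ and $t\ge t_\varepsilon-\tau$, and by Lemma \ref{lem01}(i) $u(\cdot,t)$ is bounded below by a positive constant on $[t_\varepsilon-\tau,t_\varepsilon]$. On $[0,\mathbf A+\varepsilon]$ I would construct, by a deformation of $g$ patterned on Lemmas \ref{lem2}, \ref{lem} and \ref{lem1} with the cut-off $B$ there replaced by $\mathbf A+\varepsilon$, a nondecreasing Lipschitz function $g^-_\varepsilon\le g$ whose associated scalar delayed equation $v'(t)=-f(v(t))+\int_0^\tau h(a)g^-_\varepsilon(v(t-a))\,da$ has a unique positive equilibrium $\mu_\varepsilon$ attracting all its positive solutions, with $\mu_\varepsilon\uparrow M$ as $\varepsilon\downarrow 0$; admissibility of $g^-_\varepsilon$ (nondecreasing, $\le g$ on $[0,\mathbf A+\varepsilon]$, $>f$ below $\mu_\varepsilon$ and $<f$ above it) rests on (\ref{hyp0}), (\ref{hyp2}), (\ref{r}) and the definition (\ref{Ai}) of $\mathbf A$, and the convergence $v(t)\to\mu_\varepsilon$ follows from the monotone case (Theorem \ref{th03} / Theorem \ref{perincr} in the scalar setting). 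Taking $v$ the solution of this equation with a small positive constant history $\psi_\varepsilon$ below $u_{t_\varepsilon}$, the function $\underline u(g^-_\varepsilon,\psi_\varepsilon):=v$ is a sub-solution of (\ref{A}) on $[t_\varepsilon,\infty)$ — this is where the confinement $u\le\mathbf A+\varepsilon$ for $t\ge t_\varepsilon$ is needed, so that $g^-_\varepsilon\le g$ holds on the whole range swept by $u$ — and Proposition \ref{pr0} (applied from time $t_\varepsilon$) gives $u(x,t)\ge v(t)$ for $t\ge t_\varepsilon$, whence $\liminf_{t\to\infty}u(x,t)\ge\mu_\varepsilon$. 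Letting $\varepsilon\downarrow 0$ gives $\liminf_{t\to\infty}u(x,t)\ge M$, and the lemma follows.

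The main obstacle, and the point where this case genuinely differs from Lemma \ref{lem1}, is the lower estimate: since $g$ meets the level $f(M)$ at $\mathbf A$ and (generically) falls below it immediately past $\mathbf A$, the plateau-at-$f(M)$ function $g^B_M$ of (\ref{er1}) is no longer dominated by $g$ beyond $\mathbf A$, so no single global sub-solution on $[0,B]$ is available. The remedy is exactly to establish $\limsup u\le\mathbf A$ first and then to work on the slightly enlarged interval $[0,\mathbf A+\varepsilon]$ with an $\varepsilon$-family of deformations whose equilibria approach $M$ from below; accordingly, the hypotheses (\ref{Hypsupl})--(\ref{cond}) are precisely what is needed so that $\mathbf A$, rather than $B$, is the ceiling produced by the super-solution in the first step.
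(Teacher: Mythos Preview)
Your argument is correct in outline and reaches the conclusion, but it differs from the paper's proof in both halves.

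For the upper estimate, the paper does not use $\bar g(s)=\max_{[0,s]}g$ together with a fluctuation argument. Instead, invoking (\ref{Hypsupl}) and (\ref{cond}), it constructs a nondecreasing $g^{+}$ on $[0,B]$ with $g^{+}\ge g$, $g^{+}>f$ on $(0,\mathbf A)$ and $g^{+}<f$ on $(\mathbf A,B]$, takes the PDE super-solution with nonlinearity $g^{+}$, and applies Theorem~\ref{th03} directly to get convergence to $\mathbf A$. Your route is more elementary (no need to exhibit $g^{+}$) and uses the hypotheses in a sharper way: you only need $f(l)>f(\mathbf A)\ge g(M)$ at a putative $l>\mathbf A$. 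The paper, in return, gets a super-solution whose limit is exactly $\mathbf A$, which it then combines with the positive invariance of $[0^{*},\mathbf A^{*}]_{X}$ and the global attractor of Lemma~\ref{lem01}(ii) to pass from $\limsup u\le\mathbf A$ to actual confinement $u\le\mathbf A$ for $t>T$. The paper also first records that $u^{*}<\mathbf A$, a point you use implicitly.

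For the lower estimate the difference is more substantial. The paper, having confined $u$ to $[0,\mathbf A]$, uses the single function $g_M^{\mathbf A}$ of (\ref{er1}) (with $B$ replaced by $\mathbf A$) as sub-solution on that exact interval; no $\varepsilon$-family is needed because the invariance/attractor step delivers exact, not approximate, confinement. Your $\varepsilon$-approach is the honest substitute if one does not want to invoke that machinery, and it works, but one caution: the claim that the deformed equation has a \emph{unique} positive equilibrium $\mu_\varepsilon$ is not guaranteed by the standing hypotheses (you only know $f(s)<f(M)$ for $s<M$, not monotonicity of $f$ there). What is true, and suffices, is that every positive equilibrium of your $g^-_\varepsilon$-equation lies in $[M-\delta,M]$ for $\varepsilon$ small enough (since the lowered plateau $c_\varepsilon\uparrow f(M)$ and (\ref{hyp2}) forces solutions of $f=c_\varepsilon$ in $[0,\mathbf A+\varepsilon]$ to cluster at $M$); the monotone dynamics then still yield $\liminf v\ge M-\delta$, and you can let $\delta\downarrow 0$.
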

 \begin{proof}
 First, we claim that $u^{*}<\mathbf{A}.$ Conversely,  suppose  $u^{*}\geq \mathbf{A},$ then if $u^{*}> \mathbf{A},$  due to the first assertion of (\ref{hyp0}) and $M<\mathbf{A}$, we obtain  $$ f(M):=g(\mathbf{A})\geq\min\limits_{\sigma \in [M,u^{*}]}g(\sigma)>f(M),$$ which is a contradiction. Further $u^{*}\neq \mathbf{A},$ if not, the  assertions in (\ref{hyp0}) give $$f(M)=g(\mathbf{A})=f(\mathbf{A})=f(u^{*}),$$
 however, $u^{*}>M$ and the claim is established by the second assertion of (\ref{hyp2}).

 On the other hand, from (\ref{Hypsupl}) and (\ref{cond}) we may  construct a  non-decreasing function $g^{+}$ over $[0,B]$ satisfying
 \begin{equation}\label{g+}
 \left\{
 \begin{array}{lll}\vspace{0.2cm}
 g^{+}(s)\geq g(s),&\quad \mbox{for} &\quad 0\leq s \leq B,\\ \vspace{0.2cm}
  g^{+}(s)>f(s),&\quad \mbox{for} &\quad 0<s<\mathbf{A},\\
  g^{+}(s)<f(s),&\quad \mbox{for} &\quad \mathbf{A}<s\leq B.
 \end{array}
 \right.
 \end{equation}

 Note that $\bar{u}(g^{+},\phi)$ with $\bar{u}=v$ is a super-solution of (\ref{A}) thus, it follows from  Proposition \ref{pr0}, that $u(x,t)\leq v(x,t)$ for all $(x,t)\in \bar{\Omega}\times [0,\infty).$ Therefore, Theorem \ref{th03} combined with (\ref{g+}) give
\begin{equation*}
\limsup\limits_{t\rightarrow \infty}u(x,t)\leq \mathbf{A}.
\end{equation*}
Further, as  $[0^{*},\mathbf{A}^{*}]_{X}$ is an invariant closed interval for (\ref{A}) \big($\bar{u}(g^{+},\mathbf{A})$ with $\bar{u}=\mathbf{A}$  is a super-solution of (\ref{A})\big); then  the result follows from Lemma \ref{lem01} (ii) . As a conclusion, there exists $T>0$ such that $0\leq u(x,t)\leq \mathbf{A}$ for all $(x,t)\in \bar{\Omega}\times (T,\infty).$
Next we turn to the lower bound of $u.$ From (\ref{hyp2}), (\ref{Ai})  the function $g_M^{\mathbf{A}}$ defined in (\ref{er1}) (by substituting $B$ by $\mathbf{A}$) is nondecreasing and satisfies (\ref{gA}). Now the function $v(g^{-},\phi)$ with $g^{-}=g_{\mathbf{A}}^{M}$ is a sub-solution of (\ref{A}), so $v(x,t)\leq u(x,t)$ for all $(x,t)\in \bar{\Omega}\times[0,\infty)$ and thus $\liminf\limits_{t\rightarrow \infty}u(x,t)\geq M$ for all $(x,t)\in \bar{\Omega}\times[0,\infty).$ Finally, notice that $[M^{*},\mathbf{A}^{*}]_{X}$ is an invariant closed interval for (\ref{A}), therefore Lemma \ref{lem01} (ii) completes the proof.

\end{proof}

\section{Global attractivity and exponential stability of the positive steady state}
\vspace{0.2cm}

In order to prove the global attractivity of the positive steady state,  from now on, we need the following hypotheses and definitions:

There exists a unique positive constant $M$ such that,
\begin{equation}\label{hyp44}
\begin{array}{lll} \vspace{0.2cm}
g(M)=\max\limits_{s\in \mathbb{R}^+}{g(s)},\mbox{and}\\
\mbox{the function}\; g\; \mbox{is nondecreasing  over}\; (0,M)\; \mbox{and nonincreasing over}\; (M,B).
\end{array}
\end{equation}

The following corollary deals with the case when $u^{*}\leq M$ and its proof is a direct consequence of Lemma \ref{lem3} and Theorem \ref{th03} .
\begin{Corollary}\label{cor1}
   Assume that (\ref{hyp0}), (\ref{hyp000}) hold. Then the positive steady state $u^{*}$ of problem (\ref{A}) is globally attractive provided that $u^{*}\leq M$ and $\phi\in X_{+}\setminus\{0\}.$
\end{Corollary}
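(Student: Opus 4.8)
\textbf{Proof proposal for Corollary \ref{cor1}.}
The plan is to reduce the case $u^{*}\le M$ to the monotone situation already handled by Theorem \ref{th03}, using Lemma \ref{lem3} to confine the dynamics to a region on which $g$ is nondecreasing. First I would invoke Lemma \ref{lem3}: under hypotheses (\ref{hyp0}) and (\ref{hyp4}) — which is contained in (\ref{hyp000})-type structure together with the standing assumption (\ref{hyp44}) ensuring $g$ attains its maximum at $M$ — and since $u^{*}\in[0^{*},M^{*}]_{C}$ by assumption, there exists $T>0$ with $0\le u(x,t)\le M$ for all $x\in\bar\Omega$ and $t\ge T$. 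Thus the $\omega$-limit set of any $\phi\in X_{+}\setminus\{0\}$ lies in $[0^{*},M^{*}]_{X}$.

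Next, I would restrict attention to initial data in $[0^{*},M^{*}]_{X}$, which is legitimate because the semiflow eventually enters this set and (by Lemma \ref{lem01}(ii)) admits a compact global attractor contained in it; equivalently, one translates time so that $\phi=u_{T}\in[0^{*},M^{*}]_{X}$. On this set the delayed term acts only through values of $g$ on $[0,M]$, where, by (\ref{hyp44}), $g$ is nondecreasing. Hence for such initial data, $\underline{u}=0^{*}$ and $\bar{u}=M^{*}$ are ordered (time-independent) sub- and super-solutions of (\ref{A}) in the sense of the Definition, with $g^{-}=g^{+}=g|_{[0,M]}$ extended monotonically. By the stationary-problem lemma — using that $(g(u)-f(u))/u$ decreases on $[0,M]$, or more directly the standing hypothesis that (\ref{stat}) has a unique positive solution, which here must coincide with the constant $u^{*}$ since $g(u^{*})=f(u^{*})$ and $0<u^{*}\le M$ — the stationary problem has a unique positive solution $u^{*}$ in $[0^{*},M^{*}]$.

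Finally, I would apply Theorem \ref{th03} with $\underline{u}=0^{*}$, $\bar{u}=M^{*}$, and $g$ nondecreasing on the relevant interval: every solution of (\ref{A}) with initial data in $[0^{*},M^{*}]_{X}$ converges to $u^{*}$. Combining this with the first step — that every solution enters $[0^{*},M^{*}]_{X}$ after finite time — yields that every solution of (\ref{A}) with $\phi\in X_{+}\setminus\{0\}$ converges to $u^{*}$, i.e. $u^{*}$ is globally attractive.

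The only delicate point is making the restriction to $[0^{*},M^{*}]_{X}$ fully rigorous: one must check that after time $T$ the segment $u_{T}$ really serves as an admissible initial function for the monotone comparison machinery (nonnegativity and the bound $\le M$ are exactly what Lemma \ref{lem3} provides), and that the unique positive steady state of the truncated monotone problem is genuinely $u^{*}$ rather than a spurious equilibrium created by extending $g$ beyond $M$ — this is why one takes $\bar{u}=M^{*}$ rather than $B^{*}$, so that no equilibrium above $M$ can interfere. Once this is observed, the corollary follows immediately by chaining Lemma \ref{lem3} and Theorem \ref{th03}, with no further computation.
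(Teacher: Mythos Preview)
Your proposal is correct and matches the paper's own argument, which consists of the single remark that the corollary is ``a direct consequence of Lemma~\ref{lem3} and Theorem~\ref{th03}'' --- precisely the chain you spell out (confine to $[0^{*},M^{*}]_{X}$, then use monotonicity of $g$ on $[0,M]$ from (\ref{hyp44}) to invoke the monotone convergence theorem). One cosmetic point: Theorem~\ref{th03} asks for a \emph{positive} sub-solution, so instead of $\underline{u}=0^{*}$ take $\underline{u}=\varepsilon^{*}$ for a small $\varepsilon>0$ furnished by the strong-persistence result (or simply cite Theorem~\ref{perincr} in place of Theorem~\ref{th03}).
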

Now we deal with situation where $u^{*}>M.$
According to Lemma \ref{lm}, and without a loss of generality we suppose that the initial function $\phi \in [M^{*},\mathbf{A}^{*}]_{X}.$

 Let $\hat{f}$ be the restriction of $f$ to  $[M,\mathbf{A}],$  and for $s\in [M,\mathbf{A}]$ define $G(s):={\hat{f}}^{-1}(g(s))$ if $\hat{f}$ is strictly increasing over $[M,\mathbf{A}].$  Notice that  for all $s\in[M,\mathbf{A}]$ the range of $g$ is contained in $[\hat{f}(M),\hat{f}(\mathbf{A})];$ in fact, for all $s\in[M,\mathbf{A}]$ and since $g$ is non-increasing over $[M,\mathbf{A}]$ then $g(\mathbf{A})\leq g(s)\leq g(M).$ In addition, from (\ref{Ai}) and (\ref{cond}) we have
\begin{equation*}
\hat{f}(M)\leq g(s)\leq \hat{f}(\mathbf{A}), \quad \mbox{for all} \;\; s\in [M,\mathbf{A}].
\end{equation*}
 As a conclusion the function $G$ is non-increasing and maps $[M,\mathbf{A}]$ to $[M,\mathbf{A}].$

 We now establish  the main theorem of this section related to the case  $u^{*}>M.$
\begin{Theorem}\label{th3}
Under the assumptions of Lemma \ref{lm},  the positive steady state of problem (\ref{A}) is globally attractive provided that  one of the following conditions holds:\
\begin{item}
 \item{\textbf{(H1)}} $\big(f(s)-f(0)\big)\big(g(s)-g(0)\big)$ is a nondecreasing function over $[M, \mathbf{A}]$.\\

 \item{\textbf{(H2)}} $f+g$ is a nondecreasing function over $[M, \mathbf{A}]$.\\

\item{\textbf{(H3)}} $\big(f(s)-f(0)\big)\big(g(s)-g(0)\big)+f+ g$ is a nondecreasing function over $[M, \mathbf{A}]$.\\

 \item{\textbf{(H4)}}  $\hat{f}$ is strictly increasing over $[M,\mathbf{A}]$ and $\dfrac{(GoG)(s)}{s}$ is a non-increasing function over $[M,x^*],$\\

\item{\textbf{(H5)}} $\hat{f}$ is  strictly increasing over $[M,\mathbf{A}]$ and $\dfrac{(GoG)(s)}{s}$ is a non-increasing function over $[x^*,\mathbf{A}],$
\end{item}
\end{Theorem}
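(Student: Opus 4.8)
The plan is to combine the trapping interval of Lemma~\ref{lm} with the fluctuation method applied to the spatial extrema of $u$. By Lemma~\ref{lm} the interval $[M^{*},\mathbf{A}^{*}]_{X}$ attracts every solution and is positively invariant, so (as already noted just before the statement) we may assume $\phi\in[M^{*},\mathbf{A}^{*}]_{X}$; then $M\le u(x,t)\le\mathbf{A}$ for all $x\in\bar{\Omega}$ and $t\ge0$, the map $g$ is non-increasing on $[M,\mathbf{A}]$, and $\hat{f}=f|_{[M,\mathbf{A}]}$. Set
\[
u^{\infty}:=\limsup_{t\to\infty}\max_{\bar{\Omega}}u(\cdot,t),\qquad u_{\infty}:=\liminf_{t\to\infty}\min_{\bar{\Omega}}u(\cdot,t),
\]
which are finite and lie in $[M,\mathbf{A}]$ thanks to the compact attractor of Lemma~\ref{lem01}. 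The whole theorem reduces to showing $u_{\infty}=u^{\infty}=u^{*}$, since then $u(\cdot,t)\to u^{*}$ uniformly on $\bar{\Omega}$.

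The first task is the pair of fluctuation inequalities $f(u^{\infty})\le g(u_{\infty})$ and $g(u^{\infty})\le f(u_{\infty})$. For the first one, apply $(\cite{Thieme0},$ Proposition~A.22$)$ to the locally Lipschitz map $t\mapsto\max_{\bar{\Omega}}u(\cdot,t)$ to obtain $t_{n}\to\infty$ and maximisers $x_{n}$ of $u(\cdot,t_{n})$ with $u(x_{n},t_{n})\to u^{\infty}$ and $\partial_{t}u(x_{n},t_{n})\to0$ (the latter via the envelope theorem). At the spatial maximum $x_{n}$ one has $\Delta u(x_{n},t_{n})\le0$ — also when $x_{n}\in\partial\Omega$, because the homogeneous Neumann condition together with Hopf's lemma precludes a sign failure there — so from (\ref{A}), the monotonicity of $g$, the bound $u(x_{n},t_{n}-a)\ge\min_{\bar{\Omega}}u(\cdot,t_{n}-a)$ and $\int_{0}^{\tau}h=1$ we get
\[
\partial_{t}u(x_{n},t_{n})\le -f(u(x_{n},t_{n}))+\sup_{0\le a\le\tau}g\Big(\min_{\bar{\Omega}}u(\cdot,t_{n}-a)\Big);
\]
letting $n\to\infty$ gives $f(u^{\infty})\le g(u_{\infty})$, and the second inequality follows symmetrically at the $\liminf$. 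Combining these with (\ref{hyp0}) one checks at once that $M\le u_{\infty}\le u^{*}\le u^{\infty}\le\mathbf{A}$; e.g. $u^{\infty}<u^{*}$ would force $g(u^{\infty})\ge\min_{[u_{\infty},u^{*}]}g>f(u_{\infty})$, contradicting $g(u^{\infty})\le f(u_{\infty})$.

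The core step is to upgrade the two inequalities to $u_{\infty}=u^{\infty}$, and this is exactly where \textbf{(H1)}--\textbf{(H5)} intervene. Under \textbf{(H1)}: by (\ref{hyp000}), \textbf{(T1)} and \textbf{(T2)} the quantities $f(u^{\infty})-f(0)$, $g(u^{\infty})-g(0)$, $f(u_{\infty})-f(0)$, $g(u_{\infty})-g(0)$ are all strictly positive, so multiplying the two fluctuation inequalities (after subtracting $f(0)=g(0)$) gives $\big(f(u^{\infty})-f(0)\big)\big(g(u^{\infty})-g(0)\big)\le\big(f(u_{\infty})-f(0)\big)\big(g(u_{\infty})-g(0)\big)$, while $u_{\infty}\le u^{\infty}$ and \textbf{(H1)} give the reverse; hence equality throughout, whence $\big(f-f(0)\big)\big(g-g(0)\big)$ is constant on $[u_{\infty},u^{\infty}]$, $f(u^{\infty})=g(u_{\infty})$, $g(u^{\infty})=f(u_{\infty})$, and feeding this back into the sign pattern of $g-f$ around $u^{*}$ from (\ref{hyp0}) forces $u_{\infty}=u^{\infty}=u^{*}$. \textbf{(H2)} is identical with sums replacing products, and \textbf{(H3)} follows by adding the two resulting identities. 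For \textbf{(H4)} and \textbf{(H5)}, $\hat{f}$ is strictly increasing, so $G:=\hat{f}^{-1}\circ g$ is continuous, non-increasing, maps $[M,\mathbf{A}]$ into itself and satisfies $G(u^{*})=u^{*}$ (recall $x^{*}=u^{*}$); the fluctuation inequalities read $u^{\infty}\le G(u_{\infty})$ and $u_{\infty}\ge G(u^{\infty})$, and applying the decreasing map $G$ yields $(G\circ G)(u_{\infty})\le u_{\infty}$ and $(G\circ G)(u^{\infty})\ge u^{\infty}$. Since $u_{\infty}\le x^{*}\le u^{\infty}$, $(G\circ G)(x^{*})=x^{*}$, and $s\mapsto(G\circ G)(s)/s$ is non-increasing on $[M,x^{*}]$ for \textbf{(H4)} (resp. on $[x^{*},\mathbf{A}]$ for \textbf{(H5)}), we obtain $(G\circ G)(u_{\infty})=u_{\infty}$ (resp. $(G\circ G)(u^{\infty})=u^{\infty}$); unravelling this — using $u^{\infty}\le G(u_{\infty})$, $u_{\infty}\ge G(u^{\infty})$ and that $G$ carries $[u_{\infty},x^{*}]$ onto $[x^{*},G(u_{\infty})]$ — shows that $(u_{\infty},u^{\infty})$ would be a $2$-cycle of $G$, which the monotonicity of $(G\circ G)(s)/s$ together with $(G\circ G)(x^{*})=x^{*}$ excludes, leaving $u_{\infty}=u^{\infty}=u^{*}$.

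I expect two delicate points. The first is the PDE fluctuation step itself: rigorously extracting the sequences along which $\partial_{t}u\to0$, and, above all, justifying $\Delta u\le0$ at a spatial maximum sitting on $\partial\Omega$ — this is precisely where the homogeneous Neumann condition is exploited, via Hopf's lemma and the parabolic maximum principle. The second, finer, obstacle is the $2$-cycle exclusion underlying \textbf{(H4)}--\textbf{(H5)}; by contrast, once the two fluctuation inequalities are available the remaining case analysis for \textbf{(H1)}--\textbf{(H3)} is elementary real-variable bookkeeping with (\ref{hyp0}).
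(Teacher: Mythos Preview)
Your proof is correct and its core coincides with the paper's: derive the two fluctuation inequalities $f(u^{\infty})\le g(u_{\infty})$ and $g(u^{\infty})\le f(u_{\infty})$ at the spatial extrema, then use each of \textbf{(H1)}--\textbf{(H5)} to collapse $u_{\infty}\le u^{*}\le u^{\infty}$ to equality. The one organisational difference is that the paper first splits off a \emph{non-oscillatory case} (where $u$ eventually stays on one side of $u^{*}$) and disposes of it by a monotone sub/super-solution comparison through Theorem~\ref{perincr}, invoking the fluctuation machinery and \textbf{(H1)}--\textbf{(H5)} only in the remaining oscillatory case. You instead run the fluctuation argument uniformly, which is legitimate because the inequalities $f(u^{\infty})\le g(u_{\infty})$, $g(u^{\infty})\le f(u_{\infty})$ hold regardless of oscillation and, together with (\ref{hyp0}), already force $u_{\infty}\le u^{*}\le u^{\infty}$. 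Your route is therefore slightly shorter; the price is that the borderline situations $u_{\infty}=u^{*}$ or $u^{\infty}=u^{*}$ are folded into the final ``feeding back into (\ref{hyp0})'' step, which you only sketch, whereas in the paper these are absorbed by the separate non-oscillatory argument so that the \textbf{(H1)}--\textbf{(H5)} analysis can tacitly work with the strict configuration $u_{\infty}<u^{*}<u^{\infty}$.
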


\begin{proof}
\

\noindent \textbf{1/ Non-oscillatory case :} We first suppose that for all solutions $u$ of (\ref{A}) there exists $T>0$ such that $u(x,t)\leq u^{*}$ for all $x\in \bar{\Omega}$ and $t>T.$ Let $g^{-}$ be defined as

\begin{equation*}
g^{-}(s)=\left\{
\begin{array}{lll}\vspace{0.2cm}
\min\limits_{\sigma\in [s,u^{*}]} g(\sigma)&\quad \mbox{for} &\quad 0<s<u^{*},\\
g(u^{*}) &\quad \mbox{for} &\quad u^{*}\leq s\leq \mathbf{A},
\end{array}
\right.
\end{equation*}

 the function $\underline{u}(g^{-},\phi)$    is a sub-solution of (\ref{A}) ($g^{-}(s)\leq g(s)$ for all $s\leq u^{*}$); from Proposition\ref{pr0} $\underline{u}(x,t)\leq u(x,t)\leq u^{*}$ for all $(x,t)\in \bar{\Omega}\times[0,\infty).$ Further, in view of Theorem \ref{perincr}, $\underline{u}$ converges to $u^{*}$  as $t$ tends to infinity and $x\in \bar{\Omega}$. This means that each solution of (\ref{A}) converges to the positive steady state $u^{*}.$ Now if $u(x,t)\geq u^{*}$ for all $t\geq T,$ as above, let ${g}^{+}$ be defined as
\begin{equation}\label{p1}
{g}^{+}(s)=\left\{
\begin{array}{lll}\vspace{0.2cm}
\min\limits_{\sigma\in [s,u^{*}]} g(\sigma)&\quad \mbox{for} &\quad 0<s\leq u^{*},\\
\max\limits_{\sigma\in [u^{*},s]} g(\sigma)&\quad \mbox{for} &\quad u^{*}<s\leq \mathbf{A},
\end{array}
\right.
\end{equation}

then $\bar{u}(g^{+},\phi)$ is a super-solution of (\ref{A}) and so verify $u(x,t)\leq \bar{u}(x,t)$ for all $(x,t)\in \bar{\Omega}\times[0,\infty)$ and $\bar{u}(x,t)$ converges to $u^{*}$ as $t$ goes to infinity for all $x\in \bar{\Omega}$.

\noindent \textbf{2/ Oscillatory case} : We claim that this situation is not possible. Assume by contradiction that the solution $u$ oscillates infinitely around the positive steady state $u^*,$  we set $u^{+}(t)=\max\limits_{x\in\bar{\Omega}}u(x,t)$ and $u^{-}(t)=\min\limits_{x\in\bar{\Omega}}u(x,t).$ Denote $u^{\infty}=\lim\sup\limits_{t\rightarrow \infty}u^{+}(t)$ and  $u_{\infty}=\lim\inf\limits_{t\rightarrow \infty}u^{-}(t),$  then there exist sequences $\{t_n\}_n$, $\{s_n\}_n$ such that $(t_n,s_n)\rightarrow (+\infty,+\infty)$ as $n\rightarrow \infty$ and
\begin{equation*}
u^{+}(t_n)\rightarrow u^{\infty}, \;\ \frac{du^{+}}{dt}(t_n)=0, \;\ \mbox{as}\; n\rightarrow \infty,
\end{equation*}
and
\begin{equation*}
u^{-}(s_n)\rightarrow u_{\infty}, \;\ \frac{du^{-}}{dt}(s_n)= 0, \;\ \mbox{as}\; n\rightarrow \infty.
\end{equation*}

Let $x_n$ and $y_n$ be such that $u^{+}(t_n)=u(x_n,t_n),$ and $u^{-}(s_n)=u(y_n,s_n).$ Then
\begin{equation*}
\dfrac{\partial u}{\partial t}(x_n,t_n)=0, \;\ \dfrac{\partial u}{\partial t}(y_n,s_n)=0,
\end{equation*}
and
\begin{equation*}
\Delta u(x_n,t_n)\leq 0, \;\ \Delta u(y_n,s_n)\geq 0,
\end{equation*}
hence we get from (\ref{A}),
\begin{equation*}
\dfrac{\partial u}{\partial t}(x_n,t_n)-\Delta u(x_n,t_n)=-f(u(x_n,t_n))+\int_0^{\tau}h(a)g(u(x_n,t_n-a))da.
\end{equation*}

Passing to the limits and using the fact that $g$ is decreasing over $(M,A)$ we obtain
\begin{equation}\label{fluc}
f(u^{\infty})\leq g(u_{\infty}),
\end{equation}
and
\begin{equation}\label{fluc1}
f(u_{\infty})\geq g(u^{\infty}),
\end{equation}

from $ g(0)=f(0)$ we have,
\begin{equation}\label{fluc02}
f(u^{\infty})-f(0) \leq  g(u_{\infty})-g(0),
\end{equation}
 \begin{equation}\label{fluc03}
f(u_{\infty})-f(0) \geq g(u^{\infty})-g(0).
\end{equation}
Multiplying the expression (\ref{fluc02}) by $g(u^{\infty})-g(0)> 0$ and combining with (\ref{fluc03}) we obtain
 \begin{equation*}
 \big(f(u^{\infty})-f(0)\big)\big(g(u^{\infty})-g(0)\big) \leq \big(f(u_{\infty})-f(0)\big)\big(g(u_{\infty})-g(0)\big),
 \end{equation*}
 this fact together with the hypothesis $(H1)$ give $u^{\infty}\leq u_{\infty},$  so we reach a contradiction.
   Arguing as before we may conclude the results for $(H2)$ and $(H3)$.
Now suppose that $(H4)$ holds, in view of (\ref{fluc}), (\ref{fluc1}) and the monotonicity of $\hat{f}$ we arrive at,
 \begin{equation}\label{fluc2}
 u^{\infty} \leq G(u_{\infty}),
\end{equation}
and
 \begin{equation}\label{fluc3}
 u_{\infty} \geq  G(u^{\infty}).
\end{equation}
 Assume that $u_{\infty}<u^{*}\leq u^{\infty},$
then, applying the function $G,$ the inequalities (\ref{fluc2})-(\ref{fluc3})  become
\begin{equation*}
 u_{\infty} \geq G(u^{\infty}) \geq  (GoG)(u_{\infty}),
\end{equation*}
this gives,
\begin{equation}\label{r2}
\dfrac{(GoG)(u_{\infty})}{u_{\infty}}\leq 1=\dfrac{(GoG)(u^{*})}{u^{*}},
\end{equation}
due to $(H4)$ it ensures that $u^{*}\leq u_{\infty},$ which is a contradiction. Moreover, if $u_{\infty}\leq u^{*}<u^{\infty}$  then  from (\ref{r2}) we get
\begin{equation*}
u_{\infty}=u^{*}.
\end{equation*}
In addition, according to (\ref{fluc}) we have
\begin{equation*}
f(u^{\infty})\leq g(u_{\infty})=g(u^{*})=f(u^{*})<f(u^{\infty}),
\end{equation*}
 the contradiction is also reached. Using the same arguments as in $(H4)$ we establish the result for $(H5)$. The Theorem is proved.
\end{proof}

 Now let us investigate the global exponential stability of  the positive steady state. For this, we first establish the following lemma.
\begin{Lemma}\label{lem4}
Suppose that there exist two positive constants $\alpha,$ $\beta$ and a positive function $w$ such that
\begin{equation}\label{5}
w_t(x,t)-\Delta w(x,t)\leq -\alpha w(x,t)+\beta\int_0^{\tau}h(a)w(x,t-a)da.
\end{equation}
If $\alpha>\beta,$ then there exist two positive constants $C$ and $\gamma$ with $ \gamma \in (0,\alpha-\beta)$ such that
\begin{equation*}
w(x,t)\leq Ce^{-\gamma t},\;\ \forall (x,t)\in \bar{\Omega}\times [-\tau,\infty).
\end{equation*}
\end{Lemma}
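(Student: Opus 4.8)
The plan is to prove an exponential decay estimate for a subsolution $w$ of the linear delay inequality \eqref{5}, using a comparison argument with an explicit exponential supersolution. First I would look for $\gamma \in (0,\alpha-\beta)$ and a constant $C>0$ such that $z(x,t):=Ce^{-\gamma t}$ is a supersolution of the corresponding equation; since $z$ is spatially constant, $\Delta z = 0$, so plugging in gives the requirement
\begin{equation*}
-\gamma C e^{-\gamma t} \geq -\alpha C e^{-\gamma t} + \beta \int_0^\tau h(a) C e^{-\gamma(t-a)}\,da = C e^{-\gamma t}\Bigl(-\alpha + \beta \int_0^\tau h(a) e^{\gamma a}\,da\Bigr),
\end{equation*}
i.e. $-\gamma \geq -\alpha + \beta \int_0^\tau h(a) e^{\gamma a}\,da$. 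Define $\Psi(\gamma) := -\gamma + \alpha - \beta \int_0^\tau h(a) e^{\gamma a}\,da$. Then $\Psi(0) = \alpha - \beta > 0$ by hypothesis, and $\Psi$ is continuous (indeed decreasing) in $\gamma$ on $[0,\tau]$-admissible values, so by continuity there exists $\gamma \in (0,\alpha-\beta)$ with $\Psi(\gamma) > 0$; one may additionally take $\gamma$ small enough that $\gamma < \alpha - \beta$, which is automatic from $\Psi(\gamma)>0$ together with $\int_0^\tau h(a)e^{\gamma a}\,da > 1$.

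Next I would fix such a $\gamma$ and then choose $C$ large enough that $Ce^{-\gamma t} \geq w(x,t)$ on the initial strip $\bar{\Omega}\times[-\tau,0]$; this is possible because $w$ is continuous, hence bounded, on the compact set $\bar{\Omega}\times[-\tau,0]$, so it suffices to take $C \geq e^{\gamma\tau}\max_{\bar\Omega\times[-\tau,0]} w$. With this choice, $W(x,t) := Ce^{-\gamma t} - w(x,t)$ satisfies $W \geq 0$ on $\bar\Omega\times[-\tau,0]$, and from \eqref{5} and the supersolution inequality for $z$ we get
\begin{equation*}
W_t(x,t) - \Delta W(x,t) \geq -\alpha W(x,t) + \beta \int_0^\tau h(a) W(x,t-a)\,da, \quad (x,t)\in\Omega\times(0,\infty),
\end{equation*}
together with the Neumann condition $\partial W/\partial n = 0$ inherited from both $w$ and $z$ (here I am implicitly assuming, as is natural in this setting, that $w$ also satisfies a homogeneous Neumann condition on $\partial\Omega$; if only the differential inequality is assumed, the same argument works with the boundary handled by the maximum principle for the linear parabolic operator with the lower-order delay term).

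Then I would conclude $W \geq 0$ on $\bar\Omega\times[-\tau,\infty)$ by a standard comparison/positivity argument for the cooperative linear delay parabolic problem: on each successive interval $[0,\tau], [\tau,2\tau], \dots$ the delayed term $\beta\int_0^\tau h(a)W(x,t-a)\,da$ is a known nonnegative forcing (nonnegative by the induction hypothesis on the previous interval), so $W$ solves a scalar linear parabolic inequality $W_t - \Delta W + \alpha W \geq (\text{nonnegative})$ with nonnegative initial data and Neumann boundary data, whence $W \geq 0$ by the parabolic maximum principle (e.g. Theorem 7.3.4 / 7.4.1 in \cite{Smith}, as used earlier in the paper). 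Iterating over all intervals gives $w(x,t) \leq Ce^{-\gamma t}$ for all $(x,t)\in\bar\Omega\times[-\tau,\infty)$, which is the claim. The main obstacle is purely the bookkeeping in the root-finding step — verifying that $\gamma$ can be chosen simultaneously positive, less than $\alpha-\beta$, and satisfying $\Psi(\gamma)\geq 0$ — but this follows cleanly from $\Psi(0)=\alpha-\beta>0$ and continuity; everything else is a routine comparison argument.
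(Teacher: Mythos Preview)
Your proof is correct and follows essentially the same approach as the paper: construct an exponentially decaying spatially-constant barrier $z(t)=Ce^{-\gamma t}$ and conclude by comparison. The only minor difference is that the paper finds $\gamma$ as an exact root of the characteristic equation $\beta\int_0^\tau h(a)e^{\gamma a}\,da+\gamma=\alpha$ via the intermediate value theorem on $(0,\alpha-\beta)$, whereas you only require the supersolution inequality $\Psi(\gamma)\geq 0$ and take $\gamma$ small by continuity from $\Psi(0)>0$; you are also more explicit about the choice of $C$ and the step-by-step comparison, which the paper subsumes under ``by comparison principle.''
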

\begin{proof}
By comparison principle, the result will be reached  if there exists $\gamma \in (0,\alpha-\beta)$ such that $z(t)=Ce^{-\gamma t}$ is solution of the following problem
\begin{equation}\label{z}
z'(t)= -\alpha z(t)+\beta\int_0^{\tau}h(a)z(t-a)da.
\end{equation}
Indeed by substituting the expression of $z$ in (\ref{z}) we get
\begin{equation*}
\beta\int_0^{\tau}h(a)e^{\gamma a}da+\gamma=\alpha,
\end{equation*}
thus for $F(\gamma):=\beta\int_0^{\tau}h(a)e^{\gamma a}da+\gamma$ we have
\begin{equation*}
F(\alpha-\beta)=\beta\bigg(\int_0^{\tau}h(a)e^{(\alpha-\beta)a}da-1\bigg)+\alpha>\alpha \;\ \mbox{and}\;\ F(0)=\beta<\alpha.
\end{equation*}
The Lemma is proved.
\end{proof}
Now we are in position to present our main result concerning the exponential stability of the steady state.
 \begin{Theorem}\label{th2}
 Under the conditions of  Theorem \ref{th3}. Suppose also that $f,$ $g$ are differential functions and $f$ is strictly increasing function on $[M,\mathbf{A}].$
 Then  the  positive steady state $u^{*}$ of (\ref{A}) is globally exponentially stable provided that
 \begin{equation}\label{hyp}
\underset{s\in [M,\mathbf{A}]}{\inf} f'(s)>  \underset{s\in [M,\mathbf{A}]}{\sup}|g'(s)|.
\end{equation}
 \end{Theorem}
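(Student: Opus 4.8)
The plan is to linearize around the steady state $u^{*}$ and to squeeze the error $w=|u-u^{*}|$ (more precisely two one-sided errors) between zero and an exponentially decaying comparison function via Lemma~\ref{lem4}. By Theorem~\ref{th3} we already know $u(x,t)\to u^{*}$, and by Lemma~\ref{lm} we may assume that after some time $T_0$ the solution stays in the invariant interval $[M^{*},\mathbf{A}^{*}]_{X}$; hence for $t\ge T_0$ all arguments of $f$ and $g$ that appear below lie in $[M,\mathbf{A}]$, where $f$ is strictly increasing and $f,g$ are $C^{1}$. Since $g(u^{*})=f(u^{*})$, write for $t\ge T_0+\tau$
\[
\big(u(x,t)-u^{*}\big)_t-\Delta\big(u(x,t)-u^{*}\big)=-\big(f(u(x,t))-f(u^{*})\big)+\int_0^{\tau}h(a)\big(g(u(x,t-a))-g(u^{*})\big)\,da,
\]
and apply the mean value theorem to each difference: $f(u)-f(u^{*})=f'(\xi)(u-u^{*})$ with $\xi$ between $u$ and $u^{*}$ (so $f'(\xi)\ge\alpha:=\inf_{[M,\mathbf{A}]}f'>0$), and $|g(u)-g(u^{*})|\le\beta\,|u-u^{*}|$ with $\beta:=\sup_{[M,\mathbf{A}]}|g'|$. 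Assumption \eqref{hyp} is exactly $\alpha>\beta$.

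Next I would turn this into a differential inequality for $w(x,t):=|u(x,t)-u^{*}|$. The term $-f'(\xi)(u-u^{*})$ contributes $-f'(\xi)w\le-\alpha w$ after taking absolute values and using that $f'(\xi)(u-u^{*})$ has the same sign as $u-u^{*}$; the delayed integral contributes at most $\beta\int_0^{\tau}h(a)w(x,t-a)\,da$. The one subtlety is that $w=|u-u^{*}|$ is only Lipschitz, not $C^{2}$, so one should either work with the two functions $(u-u^{*})^{+}$ and $(u-u^{*})^{-}$ separately — each of which satisfies the required differential inequality in the viscosity/weak sense, and to each of which the parabolic comparison principle underlying Lemma~\ref{lem4} applies — or invoke Kato's inequality $\partial_t|v|-\Delta|v|\le \mathrm{sgn}(v)(\partial_t v-\Delta v)$. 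Either way one arrives at
\[
w_t(x,t)-\Delta w(x,t)\le-\alpha w(x,t)+\beta\int_0^{\tau}h(a)w(x,t-a)\,da,\qquad (x,t)\in\Omega\times(T_0+\tau,\infty),
\]
together with the Neumann condition $\partial w/\partial n=0$ on $\partial\Omega$, which $w$ inherits from $u$ and the constant $u^{*}$. This is precisely hypothesis \eqref{5} of Lemma~\ref{lem4}.

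Finally, with $\alpha>\beta$ guaranteed by \eqref{hyp}, Lemma~\ref{lem4} yields constants $C>0$ and $\gamma\in(0,\alpha-\beta)$ with $w(x,t)\le Ce^{-\gamma t}$ for all $x\in\bar\Omega$ and $t\ge T_0+\tau$, i.e. $\|u(\cdot,t)-u^{*}\|\le Ce^{-\gamma t}$; extending the estimate to $t\in[0,T_0+\tau]$ (where $w$ is bounded by compactness from Lemma~\ref{lem01}(ii)) at the cost of enlarging $C$, and observing that the argument did not use anything about the initial data $\phi\in X_{+}\setminus\{0\}$ beyond global attractivity, gives global exponential stability of $u^{*}$. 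The main obstacle, and the only place requiring care beyond bookkeeping, is the passage from the signed linearized identity to the scalar differential inequality for $w=|u-u^{*}|$ under Neumann boundary conditions — handling the non-smoothness of the absolute value and verifying that the boundary condition and the comparison principle of Lemma~\ref{lem4} survive this step; once that is in place the conclusion is immediate.
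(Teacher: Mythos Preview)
Your linearization and the invocation of Lemma~\ref{lem4} with $\alpha=\inf_{[M,\mathbf{A}]}f'$ and $\beta=\sup_{[M,\mathbf{A}]}|g'|$ are exactly what the paper does. The difference is in how the non-smoothness of $|u-u^{*}|$ is handled.

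The paper sidesteps this issue entirely by recalling that, in the proof of Theorem~\ref{th3}, the oscillatory case was shown to be impossible: every solution eventually satisfies either $u(x,t)\ge u^{*}$ for all $x\in\bar\Omega$ and $t\ge T$, or $u(x,t)\le u^{*}$ for all $x\in\bar\Omega$ and $t\ge T$. Hence $v=u-u^{*}$ (or $-v$) is itself eventually nonnegative and smooth, and the inequality
\[
v_t-\Delta v\le -\alpha\, v+\beta\int_0^{\tau}h(a)\,v(x,t-a)\,da
\]
holds classically; Lemma~\ref{lem4} then applies directly, with no Kato inequality or viscosity machinery needed.

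Your route via $w=|u-u^{*}|$ is workable but more technical. One caution: the alternative you mention of treating $(u-u^{*})^{+}$ and $(u-u^{*})^{-}$ \emph{separately} does not actually decouple them, because the delayed term $g'(\eta_a)\,v(x,t-a)$ can take either sign independently of the sign of $v(x,t)$; the inequality for $v^{+}$ alone would still pick up a contribution from $v^{-}$. The Kato-inequality argument applied directly to $|v|$ is the correct option among the two you list. What the paper's non-oscillation observation buys is precisely the elimination of this detour: one stays in the classical setting throughout, and the comparison in Lemma~\ref{lem4} applies without qualification.
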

 \begin{proof}
 First of all, in view of the proof of Theorem \ref{th3}, notice that the oscillatory case is not possible.  Thus, we set $v(x,t)=u(x,t)-u^{*},$ and we suppose that, there exists $T>0$ such that  $u(x,t)\geq 0$ for all $x\in \bar{\Omega}$ and $t\geq T$ (the proof will be the same if we assume that $y(t)\leq 0$ for all $t\geq T$).  Then $v$ satisfies the following problem
 \begin{equation*}
 v_t(x,t)-\Delta v(x,t)=-f'(\theta(t))v(x,t)+\int_0^{\tau}h(a)g'(\theta_1(t-a))v(x,t-a)da,
 \end{equation*}
 where $\theta(t)$ ($\theta_1(t-a)$ respectively) is a value between $u(x,t)$ and $u^{*}$ ($u(x,t-a)$ and $u^{*}$ respectively).
 Since $u(x,t)$ and $u^{*}$ belong to $[M, \mathbf{A}]$ we obtain,
 \begin{equation*}
 v_t(x,t)-\Delta v(x,t)\leq -\inf_{s\in[M,\mathbf{A}]} f'(s)v(x,t)+\sup_{s\in[M,\mathbf{A}]}|g'(s)|\int_0^{\tau}h(a)v(x,t-a)da.
 \end{equation*}
The result is established from (\ref{hyp}) and by applying  Lemma \ref{lem4} for $\alpha=\underset{s\in[M,\mathbf{A}]}{\inf} f'(s)$ and $\beta=\underset{s\in[M,\mathbf{A}]}{\sup}|g'(s)|$.
\end{proof}

\section*{Applications}
The goal of this section is to apply our results to two well-known models, namely Blowflies and  Mackey-Glass distributed delay equation. For a good survey in this direction see \cite{Berezansky} and references therein. For more details, concerning the results of stability of these both models, see \cite{Touaoula}.

\noindent First, observe that the condition  (\ref{hyp0}) is verified whenever the positive equilibrium exists. We set $\int_0^{\tau}h(a)da=1.$
\subsection*{Diffusive Nicholson's blowflies equation}
We consider The diffusive Nicholson's blowflies equation with distributed delay,
\begin{equation}\label{blow}
\begin{array}{lll}
u_t(x,t)-\Delta u(x,t)=-\delta u(x,t)+\int_0^{\tau}h(a)u(x,t-a)e^{-u(x,t-a)}da,\; x\in \Omega,\; t>0,
\end{array}
\end{equation}
together with homogenous Neumann condition.
For more details and results concerning the blowflies model, see \cite{Berezansky} and references therein.
As a direct application of our theorems, namely  Corollary \ref{cor1}, Theorem \ref{th3} (H4) and Theorem \ref{th2} we get the following results,
\begin{Theorem}
The unique positive steady state of (\ref{blow}) is globally attractive provided that
\begin{equation*}
1<\dfrac{1}{\delta}\leq e^{2}.
\end{equation*}
Moreover this positive steady state is globally exponentially stable if,
\begin{equation}\label{blow2}
  e<\dfrac{1}{\delta}< e^2.
\end{equation}
\end{Theorem}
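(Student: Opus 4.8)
The plan is to verify that the diffusive Nicholson equation \eqref{blow} fits the abstract framework, and then invoke Corollary \ref{cor1}, Theorem \ref{th3}(H4) and Theorem \ref{th2} with the explicit choices $f(s)=\delta s$ and $g(s)=se^{-s}$. First I would record the elementary facts about $g$: it satisfies $g(0)=0=f(0)$, it is strictly increasing on $(0,1)$ and strictly decreasing on $(1,\infty)$, so $M=1$ is the unique maximizer, $g(M)=e^{-1}$, and $g'(s)=(1-s)e^{-s}$. The positive steady state $u^{*}$ solves $\delta u^{*}=u^{*}e^{-u^{*}}$, i.e. $u^{*}=-\ln\delta=\ln(1/\delta)$, which is positive precisely when $\delta<1$, i.e. $1/\delta>1$; under that condition \eqref{hyp0} holds automatically (as remarked at the start of the Applications section) and \eqref{hyp000} holds since $f'(0)=\delta>0$, $g'(0)=1>\delta=f'(0)$, and $f(s)=\delta s>0=f(0)$ for $s>0$. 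The dichotomy is then governed by whether $u^{*}\le M=1$ or $u^{*}>1$, equivalently whether $\ln(1/\delta)\le 1$, i.e. $1/\delta\le e$, or $1/\delta>e$.

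For the global attractivity claim, in the range $1<1/\delta\le e$ we have $u^{*}\le 1=M$, so Corollary \ref{cor1} applies directly and gives global attractivity. In the range $e<1/\delta\le e^{2}$ we have $1<u^{*}\le 2$, hence $u^{*}>M$, and I would check the hypotheses of Lemma \ref{lm} and then verify condition (H4) of Theorem \ref{th3}. Here $\hat f(s)=\delta s$ is strictly increasing on $[M,\mathbf A]$, so $G(s)=\hat f^{-1}(g(s))=\frac1\delta\,se^{-s}$ is well defined; one computes $\mathbf A$ from $g(\mathbf A)=f(M)=\delta$, and I would need the auxiliary monotonicity conditions \eqref{hyp2} and \eqref{Hypsupl} — trivial since $f$ is linear and strictly increasing — together with \eqref{cond}, $f(\mathbf A)\ge g(M)=e^{-1}$, which follows from $g(\mathbf A)=\delta$ and the relation between $\mathbf A$ and $M$. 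The substantive point is that $(G\circ G)(s)/s$ is non-increasing on $[M,x^{*}]$: writing $(G\circ G)(s)=\frac1{\delta^{2}}\,g(s)\,e^{-g(s)/\delta}$ one differentiates the ratio and reduces the sign condition to an inequality that, after taking logarithms, becomes linear in the relevant quantities and is seen to hold exactly when $1/\delta\le e^{2}$; this is the step where the upper bound $e^{2}$ enters and is, I expect, the main technical obstacle.

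For the exponential stability claim, I would apply Theorem \ref{th2}: it suffices to verify \eqref{hyp}, namely $\inf_{s\in[M,\mathbf A]}f'(s)>\sup_{s\in[M,\mathbf A]}|g'(s)|$. Since $f'\equiv\delta$ and $g'(s)=(1-s)e^{-s}$, we need $\delta>\max_{s\in[1,\mathbf A]}|(1-s)e^{-s}|=\max_{s\in[1,\mathbf A]}(s-1)e^{-s}$. The function $(s-1)e^{-s}$ on $[1,\infty)$ increases up to $s=2$ and then decreases, with maximum value $e^{-2}$ at $s=2$; so the condition reads $\delta>e^{-2}$ whenever $\mathbf A\ge 2$ (and is even weaker when $\mathbf A<2$). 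Combining $\delta>e^{-2}$, i.e. $1/\delta<e^{2}$, with the standing requirement $1/\delta>e$ needed to be in the case $u^{*}>M$ covered by Theorem \ref{th2} yields exactly \eqref{blow2}, $e<1/\delta<e^{2}$. I would close by noting that in this regime all of (H1)--(H5) need only one to hold — (H4) having been checked above — so the hypotheses of Theorem \ref{th2} are met and exponential stability follows.
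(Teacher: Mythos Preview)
Your proposal is correct and follows exactly the approach indicated in the paper: split according to whether $u^{*}\le M$ or $u^{*}>M$ and invoke Corollary~\ref{cor1} in the first case, Theorem~\ref{th3}(H4) in the second, and Theorem~\ref{th2} for exponential stability. The paper itself gives no further detail beyond naming these three results (referring to \cite{Touaoula} for the computations), so your outline is in fact more explicit than the paper's own argument; the places you flag as the main technical obstacles---verifying \eqref{cond} and the monotonicity of $(G\circ G)(s)/s$ on $[M,u^{*}]$---are precisely where the bound $1/\delta\le e^{2}$ enters, and your identification of $\sup_{[1,\mathbf A]}|g'(s)|\le e^{-2}$ as the source of the strict inequality in \eqref{blow2} is correct.
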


\subsection*{Mackey-Glass model of hematopoiesis}
 The following diffusive Mackey-Glass equation with distributed delay is considered,
\begin{equation}\label{Mack}
u_t(x,t)-\Delta u(x,t) =-\delta u(x,t)+\int_0^{\tau}h(a)\dfrac{u(x,t-a)}{1+u^{n}(x,t-a)}da.
\end{equation}
with Neumann homogenous condition. For more results  of this type of equations, see \cite{Berezansky1} and the references therein.\

By the application of our  theorems stated in the previous sections, (Corollary \ref{cor1}, Theorem \ref{th3} (H4)-Theorem \ref{th2}) we obtain
\begin{Theorem}
Suppose that $\delta<1,$ then the positive steady state of (\ref{Mack}) is globally attractive if one of the following conditions is satisfied
\begin{equation}\label{h1}
0<n\leq 2,
\end{equation}

\begin{equation}\label{h22}
n>2\;\ \mbox{and}\;\ \dfrac{1}{\delta}<\dfrac{n}{n-2},
\end{equation}
Moreover the positive steady state of (\ref{Mack}) is globally exponentially stable if  the following condition is satisfied
\begin{equation*}
n>1\;\ \mbox{and}\;\ \dfrac{n}{n-1}<\dfrac{1}{\delta}<\dfrac{4n}{(n-1)^2}.
\end{equation*}
\end{Theorem}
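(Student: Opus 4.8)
The plan is to recognize (\ref{Mack}) as the special case of (\ref{A}) with $f(u)=\delta u$ and $g(u)=\dfrac{u}{1+u^{n}}$, and then to verify, according to the size of $n$ and $\delta$, the hypotheses of the three abstract results quoted in the statement: Theorem \ref{perincr} or Corollary \ref{cor1} for global attractivity when $u^{*}$ is ``small'', Theorem \ref{th3}(H4) when $u^{*}>M$, and Theorem \ref{th2} for exponential stability. First I would record the elementary facts: $f,g$ are Lipschitz with $f(0)=g(0)=0$, $f'(0)=\delta$, $g'(0)=1$, so (\ref{hyp000}) holds because $0<\delta<1$; the positive steady state solves $\delta\big(1+(u^{*})^{n}\big)=1$, hence $u^{*}=\big(\tfrac{1-\delta}{\delta}\big)^{1/n}$, which exists exactly because $\delta<1$, and then (\ref{hyp0}) is automatic (as noted at the start of this section). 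From $g'(u)=\dfrac{1-(n-1)u^{n}}{(1+u^{n})^{2}}$ one reads off that for $0<n\le1$ the map $g$ is nondecreasing on $\mathbb{R}^{+}$, while for $n>1$ it attains its unique maximum at $M=(n-1)^{-1/n}$ and satisfies (\ref{hyp44}); since $M^{n}=\tfrac1{n-1}$ and $(u^{*})^{n}=\tfrac{1-\delta}{\delta}$, the condition $u^{*}\le M$ is equivalent to $\tfrac1\delta\le\tfrac{n}{n-1}$.

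For global attractivity I would distinguish three cases. When $0<n\le1$, $g$ is nondecreasing with $g(u)>f(u)$ for $0\le u<u^{*}$ and $g(u)<f(u)$ for $u>u^{*}$, so (\ref{hypincr}) holds and Theorem \ref{perincr} applies. When $n>1$ and $\tfrac1\delta\le\tfrac{n}{n-1}$ (i.e. $u^{*}\le M$), Corollary \ref{cor1} applies. When $n>1$ and $u^{*}>M$, I would check the hypotheses of Lemma \ref{lm}: (\ref{hyp2}) and (\ref{Hypsupl}) are immediate because $f$ is linear and strictly increasing, and the requirement $f(\mathbf{A})\ge g(M)$ in (\ref{cond}) — with $\mathbf{A}=cM$ and $c>1$ the unique root of $c^{n}=\tfrac{n-1}{\delta}\,c-(n-1)$ — becomes an explicit inequality in $n,\delta$ that I would verify holds in the stated range. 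One then invokes Theorem \ref{th3}(H4) with $G(s)=\hat f^{-1}(g(s))=g(s)/\delta$. Using the identity $\dfrac{(G\circ G)(s)}{s}=\dfrac1{\delta^{2}(1+s^{n})\big(1+G(s)^{n}\big)}$, the monotonicity required in (H4) is equivalent to $(1+s^{n})\big(1+G(s)^{n}\big)$ being nondecreasing on $[M,u^{*}]$, and a logarithmic-derivative computation (using $G'(s)=\tfrac1\delta\tfrac{1-(n-1)s^{n}}{(1+s^{n})^{2}}\le0$ there) reduces this to
\[
\delta^{n}(1+s^{n})^{n}+1\ \ge\ (n-2)\,s^{n},\qquad s\in[M,u^{*}].
\]
For $n\le2$ the right-hand side is nonpositive, so this is trivial, which settles all $1<n\le2$. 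For $n>2$ the left-hand side minus the right-hand side is convex in $t=s^{n}$, and one checks its minimum over $[M,u^{*}]$ is nonnegative iff it is at $s=u^{*}$; there $1+(u^{*})^{n}=1/\delta$ and the inequality reads $2\ge(n-2)\tfrac{1-\delta}{\delta}$, i.e. precisely $\tfrac1\delta<\tfrac{n}{n-2}$. This reproduces exactly the two stated attractivity conditions.

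For exponential stability, assume $n>1$ and $\tfrac{n}{n-1}<\tfrac1\delta<\tfrac{4n}{(n-1)^{2}}$. The lower bound places us in the case $u^{*}>M$ above; since $\tfrac{4n}{(n-1)^{2}}\le\tfrac{n}{n-2}$ for every $n>2$ (equivalently $(n-3)^{2}\ge0$), the hypotheses of Theorem \ref{th3} through (H4) are satisfied, and $\hat f=f|_{[M,\mathbf{A}]}$ is strictly increasing. It remains to check (\ref{hyp}); since $f'\equiv\delta$, this reads $\delta>\sup_{s\in[M,\mathbf{A}]}|g'(s)|$. On $[M,\infty)$ one has $g'\le0$ and $|g'(s)|=\dfrac{(n-1)s^{n}-1}{(1+s^{n})^{2}}$, which, as a function of $t=s^{n}$, is maximized at $t=\tfrac{n+1}{n-1}$ with value $\dfrac{(n-1)^{2}}{4n}$; hence $\sup_{[M,\mathbf{A}]}|g'|\le\dfrac{(n-1)^{2}}{4n}<\delta$, and Theorem \ref{th2} yields global exponential stability of $u^{*}$.

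The hypothesis-checking for Theorem \ref{perincr}, Corollary \ref{cor1}, and the structural parts of Lemma \ref{lm}, as well as the bound on $|g'|$, is routine. The genuine work is twofold: reducing the $(G\circ G)(s)/s$ monotonicity in (H4) to the single inequality $\delta^{n}(1+s^{n})^{n}+1\ge(n-2)s^{n}$ and locating its worst point at $s=u^{*}$, which is what forces the sharp threshold $\tfrac1\delta<\tfrac{n}{n-2}$; and verifying $f(\mathbf{A})\ge g(M)$ in that same range. I expect the latter to be the main obstacle, since $\mathbf{A}$ is only defined implicitly through a transcendental equation, so the required bound $c\ge\tfrac{n-1}{n\delta}$ on its root has to be extracted from the defining relation $c^{n}=\tfrac{n-1}{\delta}c-(n-1)$ rather than from a closed form.
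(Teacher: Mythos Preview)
Your approach is exactly the paper's: it simply says ``By the application of our theorems stated in the previous sections, (Corollary \ref{cor1}, Theorem \ref{th3} (H4)--Theorem \ref{th2}) we obtain'' and defers all computations to \cite{Touaoula}, so your case split (Theorem \ref{perincr} for $0<n\le 1$, Corollary \ref{cor1} for $n>1$ with $u^{*}\le M$, Theorem \ref{th3}(H4) for $u^{*}>M$, then Theorem \ref{th2}) and your reduction of (H4) to $\delta^{n}(1+s^{n})^{n}+1\ge (n-2)s^{n}$ are in line with---and more detailed than---what the paper provides. The one step you flag as unresolved, the verification of $f(\mathbf{A})\ge g(M)$ in (\ref{cond}), is likewise not carried out in the paper itself; it is part of the deferred computations, so your identification of it as the remaining technical point is accurate.
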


\end{document}